\newtheorem{theorem}{Theorem}[section]
\newtheorem{proposition}[theorem]{Proposition}
\numberwithin{equation}{section}
\begin{document}

\title[Conelike soap films]{Conelike soap films spanning tetrahedra}

\author{Robert Huff}

\address{Department of Mathematical Sciences, Indiana University, South Bend, IN 46634}

\email{rohuff@iusb.edu}

\subjclass[2000]{Primary 49Q05; Secondary 51M04}

\begin{abstract}

In this paper we provide the first examples of non-flat soap films proven to span tetrahedra.  These are members of a continuous two parameter family of soap films with tetrahedral boundaries.  Of particular interest is a two parameter subfamily where each spanning soap film has the property that two minimal surfaces meet along an edge of the boundary at an angle greater than $120^\circ$.

\end{abstract}

\maketitle

\section{Introduction}

Soap films are modeled mathematically by Almgren's $<M,0,\delta>$ minimal sets, which away from the boundary are minimal surfaces except for a singular set of 2-dimensional Hausdorff measure zero \cite{Alm76}.  Within this singular set, only two types of singularities can occur:

1.  $Y$-singularities, which are curves along which three minimal surfaces meet at $120^\circ$

2.  $T$-singularities, which are points at which four $Y$-singularities meet at\\ $\arccos\left(-\frac{1}{3}\right)\approx 109.47^\circ$.\newline
This classification of the singular set is due to Taylor \cite{Tay76}, and it verifies the experimental observations of soap films by Plateau and his students from the mid-nineteenth century.  Another property of $<M,0,\delta>$ minimal sets which is relevant here is that two minimal surfaces cannot meet along a boundary edge at an angle less than $120^\circ$ since the area along such an edge can be reduced locally.

If the boundary is tetrahedral, Lawlor and Morgan \cite{LaMo94} have shown that the flat cone over the one-skeleton of the regular tetrahedron is the least area spanning set that separates the solid tetrahedron into four regions.  However, there are currently no other soap films known to span a tetrahedral boundary.  In fact, the first mathematical existence proofs of non-flat soap films spanning any polyhedral boundary appeared only recently in \cite{rh3} and \cite{rh5}, where the boundaries under consideration are rectangular prisms with regular $n$-gon bases and each spanning soap film is homotopic to either a point or a twice-punctured sphere.  The existence of a non-flat soap film spanning a regular tetrahedron has been conjectured by Morgan.  This soap film separates the regular tetrahedron into two regions and is topologically interesting in the sense that it is homotopic to a punctured torus.  Proving its existence would provide the first example of a soap film spanning a polyhedral boundary that is homotopic to a surface of positive genus.

The boundaries considered here are tetrahedra within a two parameter family \[ \mathcal{T}=\{T_{st}\}\ , \] where $T_{st}$ is the tetrahedron with vertices \[ (\pm s,0,-\sqrt{1-s^2-t^2}/2)\ \ \mbox{and}\ \ (0,\pm t,\sqrt{1-s^2-t^2}/2)\ . \]  This family of tetrahedra can be described as follows:

$\centerdot$\ \ $T_{st}$ is symmetric with respect to reflection through the planes $x=0$ and $y=0$

$\centerdot$\ \ The origin in $\mathbb{R}^3$ is the centroid of $T_{st}$

$\centerdot$\ \ The four non-horizontal edges of $T_{st}$ have the same length

$\centerdot$\ \ The two horizontal edges of $T_{st}$ are the same if and only if $s=t$

$\centerdot$\ \ The regular tetrahedron corresponds to $s=t=1/2$.\newline Furthermore, we can identify $\mathcal{T}$ with the domain $\mathcal{Q}$ in the $st$-plane given by \begin{equation}\label{Qdef} \mathcal{Q}=\{ (s,t)\ |\ s>0, t>0 \ \ \mbox{and}\ \ s^2+t^2<1\}\ . \end{equation}

By a \textit{conelike} soap film spanning $T_{st}$ we mean a soap film homotopic to a point that contains exactly four $Y$-singularities and one $T$-singularity.  In the theorem, the four $Y$-singularities will consist of two upper curves $Y_1$, $Y_2$ in the plane $x=0$ and two lower curves $Y_3$, $Y_4$ in the plane $y=0$.  For comparison purposes, we also denote the four Euclidean segments from the $T$-singularity to the vertices of $T_{st}$ by $E_1$, $E_2$, $E_3$ and $E_4$, where $E_1$, $E_2$ are the two upper segments contained in the plane $x=0$ and $E_3$, $E_4$ are the two lower segments contained in the plane $y=0$.  The angle between $Y_1$ and $Y_2$ is always equal to the angle between $Y_3$ and $Y_4$, and the value of this common angle is $\arccos(-1/3)$.

\begin{figure}[h]\center\includegraphics[scale=0.8]{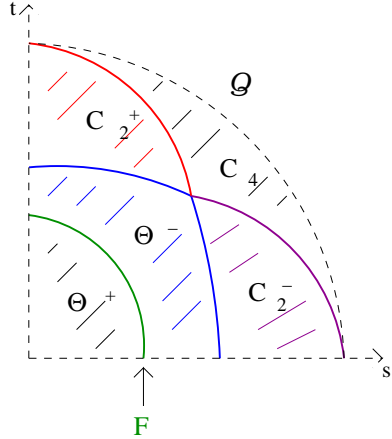}\caption{\label{Qfig}The region $\mathcal{Q}$.}\end{figure}

We will prove the existence of a conelike soap film spanning $T_{st}$ for each $(s,t)\in\mathcal{Q}$.  The qualitative properties of the spanning soap film depend on the location of $(s,t)$ relative to a partition of $\mathcal{Q}$ by sets $\Theta^+$, $\Theta^-$, $F$, $C_2^+$, $C_2^-$ and $C_4$ (See Figure \ref{Qfig}), where \begin{enumerate}
\item $\Theta^+$ consists of all points in $\mathcal{Q}$ that are strictly inside the ellipse $3s^2+3t^2+2st=2$

\item $F$ is the arc of the ellipse $3s^3+4t^2+2st=2$ that passes through $\mathcal{Q}$

\item $\Theta^-$ consists of all points in $\mathcal{Q}$ that are strictly outside the ellipse $3s^2+3t^2+2st=2$ and inside both ellipses $4s^2+3t^2=3$ and $3s^2+4t^2=3$

\item $C_2^+$ consists of all points in $\mathcal{Q}$ that are strictly outside the ellipse $3s^2+4t^2=3$ and inside the ellipse $4s^2+3t^2=3$

\item $C_2^-$ consists of all points in $\mathcal{Q}$ that are strictly outside the ellipse $4s^2+3t^2=3$ and inside the ellipse $4t^2+3s^2=3$

\item $C_4$ consists of all points in $\mathcal{Q}$ that are strictly outside both ellipses $4s^2+3t^2=3$ and $4t^2+3s^2=3$.

\end{enumerate}

With this notation, we now state the main theorem.  Figures \ref{conelike} and \ref{C4fig}, which were produced with Brakke's Surface Evolver program, show images of the soap films.

\begin{theorem}\label{main theorem}\begin{enumerate}
\item If $(s,t)\in\Theta^+$, then there exists a non-flat, conelike soap film $M_{st}$ spanning $T_{st}$ such that the angle $\arccos(-1/3)$ is greater than the angle between $E_1$ and $E_2$ and the angle between $E_3$ and $E_4$.  Furthermore, the soap films in this case can be given by explicit parameterizations.

\item If $(s,t)\in F$, then there exists a flat soap film $M_{st}$ spanning $T_{st}$.  This soap film is the cone over $T_{st}$ with vertex $p$, and $p$ is the centroid of the $T_{st}$ if and only if $s=t=1/2$.

\item If $(s,t)\in\Theta^-$, then there exists a non-flat, conelike soap film $M_{st}$ spanning $T_{st}$ such that the angle $\arccos(-1/3)$ is less than the angle between $E_1$ and $E_2$ and the angle between $E_3$ and $E_4$.  The soap films in this case can also be given by explicit parameterizations.

\item If $(s,t)\in C_2^+$, then there exists a non-flat, conelike soap film $M_{st}$ spanning $T_{st}$ such that $Y_1$ and $Y_2$ meet $T_{st}$ in the interior of its top edge, rather than at vertices.

\item If $(s,t)\in C_2^-$, then there exists a non-flat, conelike soap film $M_{st}$ spanning $T_{st}$ such that $Y_3$ and $Y_4$ meet $T_{st}$ in the interior of its bottom edge, rather than at vertices.

\item If $(s,t)\in C_4$, then there exists a non-flat, conelike soap film $M_{st}$ spanning $T_{st}$ such that all four $Y$-singularities meet $T_{st}$ in the interior of an edge.

\end{enumerate}
\end{theorem}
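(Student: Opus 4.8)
The plan is to build $M_{st}$ explicitly from the Weierstrass representation and then to prove minimality by a calibration. First I would use the symmetries: the reflections in $\{x=0\}$ and $\{y=0\}$ (and the extra order‑two symmetry when $s=t$) act on any conelike film spanning $T_{st}$, the $T$‑singularity lies on the $z$‑axis, the two faces spanning the horizontal edges lie flat in $\{x=0\}$ and $\{y=0\}$, and the remaining four faces form a single orbit. So it suffices to produce one of these four congruent faces: a minimal disk $\Sigma_{st}$ bounded by one straight segment (a slant edge of $T_{st}$) and by two arcs $Y_i$ along which $\Sigma_{st}$ meets $\{x=0\}$ and $\{y=0\}$ at exactly $60^\circ$. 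Reflecting $\Sigma_{st}$ and inserting the two flat triangles recreates $M_{st}$; the $60^\circ$ conditions force the adjacent $Y$‑curves to meet at $120^\circ$ along their whole length, and a short computation shows that two planar curves meeting orthogonal planes at $60^\circ$ necessarily subtend the tetrahedral angle $\arccos(-1/3)$ at their common endpoint, which accounts for the sentence just before the theorem.

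Next I would write $\Sigma_{st}$ in Weierstrass form $(g,\omega)$ on the upper half‑plane with its three corners (two vertices of $T_{st}$ and the $T$‑point) at $0,1,\infty$. The straight edge forces the Gauss map $g$ onto the great circle $N\cdot u=0$ ($u$ the edge direction), and each $60^\circ$ condition forces $g$ onto a circle $N_i=\pm\frac12$, which in the $g$‑plane is a circle of radius $\sqrt3$. Thus $g$ maps the domain triangle onto a circular‑arc triangle, so it is a Schwarz triangle function and is given explicitly by quotients of hypergeometric solutions of the associated Schwarzian equation; this is the origin of the ``explicit parameterizations.'' The height differential $\omega$ is then determined up to a positive real scale by demanding that the three boundary arcs of the \emph{surface} be genuinely straight, respectively genuinely planar, rather than merely carrying the prescribed Gauss map.

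After normalizing the scale so the straight edge has the length of the corresponding slant edge of $T_{st}$ and placing that edge in $\mathbb{R}^3$, one must check that the two remaining boundary arcs in fact lie in $\{x=0\}$ and $\{y=0\}$: a finite system of real period equations, which I would solve for every $(s,t)\in\mathcal{Q}$ by a continuity/degree argument anchored at the explicit flat solution $(s,t)=(1/2,1/2)$ and at $\partial\mathcal{Q}$. The six regimes then come from three explicit algebraic events. The circular‑arc triangle degenerates to a point (the three circles become concurrent) exactly when $3s^2+3t^2+2st=2$; there $g$ is constant, $M_{st}$ is flat, and $Y_i$ coincides with the comparison segment $E_i$, so inside that ellipse ($\Theta^+$) the film bows so that $E_1,E_2$ and $E_3,E_4$ subtend less than $\arccos(-1/3)$ while outside it ($\Theta^-$ and the $C$‑regions) they subtend more. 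Separately, a vertex link of $T_{st}$ acquires a $120^\circ$ angle exactly on $4s^2+3t^2=3$ and on $3s^2+4t^2=3$; when this happens the spherical Steiner tripod that would have to sit over that vertex collapses onto an edge direction, so the corresponding $Y$‑curves can no longer reach the vertex and must terminate in the interior of an adjacent horizontal edge. The three ellipses partition $\mathcal{Q}$ exactly as in Figure \ref{Qfig}, and in $C_2^\pm$ and $C_4$ one simply rebuilds $\Sigma_{st}$ with an extra corner --- a short straight arc on the detached edge in place of the vertex --- and re‑solves the slightly larger period problem.

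Finally I would prove that $M_{st}$ is a soap film, i.e.\ an $<M,0,\delta>$‑minimal set, equivalently that $M_{st}$ minimizes area among surfaces spanning the $1$‑skeleton of $T_{st}$ and cutting the solid tetrahedron into the same four regions. Following the paired‑calibration scheme that Lawlor--Morgan used for the regular tetrahedron, I would look for four divergence‑free vector fields $v_1,\dots,v_4$ on the solid tetrahedron, one per region, with $|v_i-v_j|\le 1$ everywhere and $v_i-v_j$ equal to the unit normal of the face between regions $i$ and $j$ along that face. For $(s,t)=(1/2,1/2)$ these are the constant fields of Lawlor--Morgan (the vertices of an auxiliary edge‑length‑one regular tetrahedron), but the faces of $M_{st}$ are curved for every other parameter value, so the $v_i$ must be non‑constant; constructing them from the explicit Weierstrass data --- simultaneously divergence‑free, of norm at most one, and correctly normal on all six faces --- is, along with the period problem of the previous step, the main obstacle.
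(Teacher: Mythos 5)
Your setup (symmetry reduction to one slant face, Gauss map onto a circular-arc triangle bounded by the two fixed circles of the $60^\circ$ contact conditions and the variable circle of the straight edge, Schwarz/hypergeometric explicitness, and the role of the concurrence locus $3s^2+3t^2+2st=2$ and of the ellipses $4s^2+3t^2=3$, $3s^2+4t^2=3$) matches the paper closely. But you have deferred the two steps that actually constitute the proof to unresolved ``main obstacles,'' and one of them is not the right target. The theorem does not assert that $M_{st}$ minimizes area among four-region separators, so a Lawlor--Morgan paired calibration is neither required nor, for general $(s,t)$, plausibly available; the paper instead verifies the \emph{local} soap-film conditions directly: the $Y$-curves are $120^\circ$ junctions by construction (the $60^\circ$ contact with the symmetry planes), the angles of the four arcs at $X^{st}(p_2)$ are computed to equal $\arccos(-1/3)$ so that the cone point is a genuine $T$-singularity, and --- the one point where something could genuinely fail --- in $C_2^{\pm}$ and $C_4$ the two sheets meeting along a horizontal edge of $T_{st}$ are shown to meet at angle $\ge 120^\circ$ (this is exactly the statement that the relevant intersection point of $\Gamma_{st}$ with the real or imaginary axis lies beyond $2+\sqrt3$, i.e.\ the normal is within $30^\circ$ of vertical). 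If you insist on the calibration you will be stuck; if you drop it you still owe these local verifications, which you have not carried out.

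The second gap is the ``finite system of real period equations \dots solved by a continuity/degree argument anchored at the flat solution.'' As stated this is not a proof: you do not identify the free parameters, the number of equations, or why a degree count closes, and anchoring at $(s,t)\in F$ is delicate because there the Gauss image degenerates to a point. The paper's resolution is concrete and worth contrasting with your sketch: in the triangle cases there is \emph{no} period problem --- the Riemann mapping theorem lets one prescribe the three vertices of the edge-preserving map $\zeta_{st}:\Omega_{st}\to P_{st}$, and the explicit computations of $dX_1,dX_2,dX_3$ along each boundary arc (with base point at $p_2$) show the arcs land in $\{x=0\}$, $\{y=0\}$, and on the line through $E_{st}$ automatically, with the correct monotonicity; embeddedness then follows from Rad\'o's theorem since the boundary projects injectively onto a convex polygon. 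In the pentagon case the two extra side lengths $(\ell,m)$ of the target polygon are pinned down by matching two extremal lengths ${\rm Ext}(Y_1,E^y_{st})$ and ${\rm Ext}(Y_1,E_{st})$, using monotonicity and degeneration limits (Proposition \ref{extremal length proposition}) to intersect the two solution curves. You also need the integrability of $(d\zeta_{st})^2/dz$ at the corners (equivalently, that $\Gamma_{st}$ never meets $\Gamma_1,\Gamma_2$ orthogonally) to know the image is compact; your proposal does not mention this, nor embeddedness.
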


\begin{figure}[h]\center\includegraphics[scale=0.4]{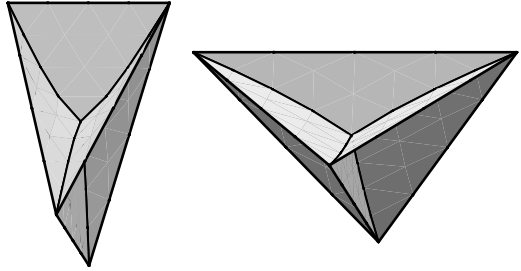}\caption{\label{conelike}On the left is a conelike soap film for $(s,t)\in\Theta^+$, and on the right is a conelike soap film for $(s,t)\in\Theta^-$.}\end{figure}

\begin{figure}[h]\center\includegraphics[scale=0.4]{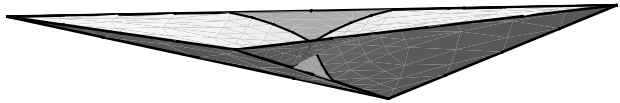}\caption{\label{C4fig}A conelike soap film for the the region $C_4$.}\end{figure}

The spanning sets in part (4) of Theorem \ref{main theorem} have the property that two minimal surfaces meet along an edge of the boundary.  Thus, if these are to be soap films we must verify that they do not meet at an angle of less than $120^\circ$.

\section{Outline of the Proof}

To prove existence in Theorem \ref{main theorem}, we derive parameterizations based on certain geometric properties of the soap films.  Each parameterization is related to conformal data on a domain in the complex plane by the following application of the famous Weierstrass Representation Theorem for minimal surfaces.

\begin{theorem}\label{wrt}  Let $\Omega\subset\mathbb{C}$ be simply connected.  If $g$ is a meromorphic function and $dh$ is a holomorphic one-form on $\Omega$ which are compatible in the sense that $g$ has a zero or pole of order $n$ at $p\in\Omega$ if and only if $dh$ has a zero of order $n$ at $p\in\Omega$, then the map $X=(X_1,X_2,X_3):\Omega\rightarrow\mathbb{R}^3$ given by
\begin{equation}\label{wrf} X(z)=\mbox{Re}\int_.^z \left(\frac{1}{2}(g^{-1}-g),\frac{i}{2}(g^{-1}+g),1\right)dh, \end{equation} is a conformal, minimal immersion.  Moreover, the function $g$ is stereographic projection of the Gauss map on the surface. \end{theorem}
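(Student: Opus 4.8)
The plan is to run the classical argument. Set $\phi=(\phi_1,\phi_2,\phi_3)=\left(\frac{1}{2}(g^{-1}-g),\frac{i}{2}(g^{-1}+g),1\right)dh$, a $\mathbb{C}^3$-valued one-form on $\Omega$. I must check three things: (i) $\phi$ is holomorphic on all of $\Omega$ and never has all three components vanish at a point; (ii) $\phi$ is \emph{isotropic}, i.e.\ $\phi_1^2+\phi_2^2+\phi_3^2\equiv 0$; and (iii) $g$ recovers the Gauss map. Once (i) and (ii) hold, simple connectivity of $\Omega$ is what lets me integrate. For (i): away from the zeros and poles of $g$ the entries of $\phi$ are visibly holomorphic, and there $dh$ is nonvanishing by the compatibility hypothesis, so in fact $\phi_3\neq 0$. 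Near a zero of $g$ of order $n$, $g^{-1}$ has a pole of order $n$ which the order-$n$ zero of $dh$ cancels exactly, so $g^{-1}dh$ is holomorphic and nonzero while $g\,dh$ has a zero of order $2n$; near a pole of $g$ the situation is symmetric, with the roles of $g^{-1}dh$ and $g\,dh$ interchanged. Hence $\phi$ is a holomorphic one-form on $\Omega$ whose full vector never vanishes.

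For (ii), the elementary identity $\frac{1}{4}(g^{-1}-g)^2-\frac{1}{4}(g^{-1}+g)^2+1=0$ gives $\sum_j\phi_j^2=0$ at once. Since $\Omega$ is simply connected and $\phi$ is holomorphic, Cauchy's theorem makes $\int_{\cdot}^{z}\phi$ independent of path, so $X=\mathrm{Re}\int_{\cdot}^{z}\phi$ is a well-defined real-analytic map whose coordinates, being real parts of holomorphic functions, are harmonic. Writing $X_z=\frac{1}{2}\phi$ in the usual complex notation, isotropy says $\langle X_z,X_z\rangle=\frac{1}{4}\sum_j\phi_j^2=0$, which is precisely the conformality condition, while $\langle X_z,\overline{X_z}\rangle=\frac{1}{4}\sum_j|\phi_j|^2>0$ by (i), so $X$ is a conformal immersion. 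A conformal harmonic map is automatically minimal, since in conformal coordinates the mean curvature vector is a positive multiple of the (vanishing) Laplacian $\Delta X$.

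Finally, for (iii) I would compute the oriented unit normal $N=\frac{X_u\times X_v}{|X_u\times X_v|}$ directly in terms of $g$; the standard manipulation gives $N=\frac{1}{|g|^2+1}\left(2\,\mathrm{Re}\,g,\;2\,\mathrm{Im}\,g,\;|g|^2-1\right)$ (up to the orientation convention), which is exactly the inverse stereographic projection of $g$, so $g$ is the stereographic projection of the Gauss map. The only step demanding genuine care is (i): the whole statement rests on the compatibility hypothesis forcing the zeros of $dh$ to cancel the poles of $g^{\pm 1}$ \emph{exactly}, so that $\phi$ is simultaneously holomorphic and nowhere zero; the isotropy identity, the path-independence, and the normal-vector computation are all routine.
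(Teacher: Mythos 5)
Your proof is correct: it is the standard textbook derivation of the Weierstrass representation (isotropy of the holomorphic one-form, cancellation of poles of $g^{\pm1}$ by the zeros of $dh$ so that $\phi$ is holomorphic and nowhere fully vanishing, path-independence from simple connectivity, conformal $+$ harmonic $\Rightarrow$ minimal, and the explicit normal-vector computation recovering $g$ as the stereographic image of the Gauss map). The paper treats this as a classical result and gives no proof of its own, so there is nothing to compare against; note only that your formula for $N$ matches the paper's convention $\sigma=\frac{x+iy}{1-x_3}$ (projection from the north pole) exactly, with no orientation discrepancy.
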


Two pieces of data, the parameter domain $\Omega$ and the function $g$, are derived simultaneously.  Using the symmetries of each soap film, we determine the image of the Gauss map under stereographic projection.  This image is a domain which we take to be $\Omega$, and thus the function $g$ is the identity.

The third and final piece of data is the one form $dh$, which is a holomorphic extension of $dX_3$ and is called the \textit{complexified height differential}.  To derive this, we use a formula that relates the second fundamental form $II$ on a minimal surface to the Weierstrass data $g$ and $dh$.  In particular, for vectors $v$ and $w$ in the tangent plane to the surface at a point, we have \begin{equation}\label{2ff} \frac{dg(v)dh(w)}{g}= II(v,w)-iII(v,iw)\ . \end{equation} A nice proof of this formula as well as the statements of properties (\ref{pc}) and (\ref{ac}) can be found in \cite{hk2}.

From (\ref{2ff}) it follows that \begin{equation}\label{pc} c\ \mbox{is a principal curve}\ \Leftrightarrow\frac{dg(\dot{c})dh(\dot{c})}{g}\in\mathbb{R} \end{equation} and \begin{equation}\label{ac} c\ \mbox{is an asymptotic curve}\ \Leftrightarrow\frac{dg(\dot{c})dh(\dot{c})}{g}\in i\mathbb{R}\ . \end{equation}  Thus, we see from (\ref{pc}) and (\ref{ac}) that the function $\zeta$ given by \begin{equation}\label{zeta} \zeta(z)=\int_.^z\sqrt{\frac{dgdh}{g}} \end{equation}  maps
principal curves into vertical or horizontal lines in $\mathbb{C}$ and asymptotic curves into lines in one of the directions $e^{\pm i\pi/4}$. The map $\zeta$ is called the \textit{developing map} of the one form $\sqrt{\frac{dgdh}{g}}$.  It is a local isometry between
the minimal surface equipped with the conformal cone metric $\left|\frac{dgdh}{g}\right|$ and $\mathbb{C}$ equipped with the
Euclidean metric.

Each soap film considered in this paper has a fundamental piece whose boundary consists of principal and asymptotic curves.  This allows us
to view the function $\zeta$ as a conformal map from the parameter domain $\Omega$ onto some Euclidean polygon with determined angles and undetermined edge lengths.  Once we know $\zeta$, we can then use (\ref{zeta}) to conclude
\begin{equation}\label{dh} dh=\frac{g(d\zeta)^2}{dg}=\frac{z(d\zeta)^2}{dz}\ . \end{equation}

Now, the domain $\Omega$ is a curvilinear polygon.  Thus, the map $\zeta$ is an edge preserving conformal map between two polygons.  The existence of such a map is not always automatic and must be proven in order to finish the derivation of the parameterization.  To accomplish this, we rely on the conformal invariant extremal length, some properties of which we now describe.  For more details, see \cite{Ah73}.

Given a curvilinear polygon $\Delta$ and a conformal metric $\rho (dx^2+dy^2)$ on $\Delta$, we denote the length of a curve $\gamma\subseteq \Delta$ with respect to this metric by $\ell\rho (\gamma)$.  Similarly, we denote the $\rho$-area of a subset $U\subseteq\Delta$ by $A\rho (U)$.  Using this notation, the \textit{extremal length} between  two connected subsets, $A$ and $B$, of $\partial\Delta$ is defined by
\begin{equation} {\rm Ext}_\Delta (A,B)=\sup_\rho\frac{\inf_\gamma [\ell\rho (\gamma)]^2} {A\rho (\Delta)}\ ,\end{equation}
where the infimum is taken over all curves $\gamma: [0,1]\to\Delta$ such that $\gamma (0)\in A$, $\gamma(1)\in B$, and $\gamma(t)\subseteq
{\rm interior}(\Delta)$ for $t\in (0,1)$; the supremum is taken over all positive Borel measurable functions on $\Delta$.

Having noted these preliminaries, the properties of extremal length used below are as follows.

\begin{proposition}\label{extremal length proposition}\begin{enumerate}
\item Extremal length is invariant under biholomorphisms.

\item ${\rm Ext}_\Delta (A,B)$ is continuous with respect to $\Delta$, $A$, and $B$.

\item If $A$ and $B$ are adjacent, i.e., $\mathop{\rm dist} (A,B)=0$, then ${\rm Ext}_\Delta (A,B)=0$.

\item If $B$ is degenerate (i.e. $B$ is a point) and $\mathop{\rm dist} (A,B)>0$,
then \[{\rm Ext}_\Delta (A,B)=\infty.\]

\item If $\Delta$ is a rectangle with edges $\{B_k\}$, $k=1,2,3,4$, such that $|B_1|=|B_3|=a$ and $|B_2|=|B_4|=b$, then
\[ {\rm Ext}_\Delta(B_1,B_3)=1/{\rm Ext}_\Delta(B_2,B_4)=\frac{b}{a} \]

\item If $\Delta_1\subset\Delta_2$ are such that $A_k,B_k\subset\Delta_k$, $k=1,2$, satisfy $A_1\subset A_2$ and
$B_1\subset B_2$, then \[ {\rm Ext}_{\Delta_2}(A_2,B_2)\le {\rm Ext}_{\Delta_1}(A_1,B_1)\ , \] where
the inequality is strict if $dist(A_2,B_2)>0$ and either $A_1\neq A_2$ or $B_1\neq B_2$.

\item If $\Delta_1\subseteq\Delta_2$ are simply connected domains such that $A_k,B_k\subseteq \partial\Delta_k$, $k=1,2$ such that
every path $\gamma$ connecting $A_2$ to $B_2$ must pass through $A_1$ and
$B_1$, then \begin{equation} {\rm Ext}_{\Delta_1}(A_1,B_1)\le {\rm Ext}_{\Delta_2}(A_2,B_2)\ . \end{equation}

\end{enumerate}
\end{proposition}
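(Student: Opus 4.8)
Proposition~\ref{extremal length proposition} collects standard facts about extremal length (see Ahlfors \cite{Ah73}); the plan is to recover each one from the \emph{length--area method} together with the conformal covariance built into the definitions of $\ell_\rho$ and $A_\rho$. Property (1) is immediate: a biholomorphism $f$ induces the correspondence $\rho\mapsto(\rho\circ f)\,|f'|^2$ between conformal metrics, under which $\rho$-lengths of curves and $\rho$-areas of sets are preserved and competing curves are carried to competing curves, so the two defining suprema coincide. For (3), let $p\in\overline A\cap\overline B$ (a common vertex, near which $\partial\Delta$ consists of a sub-arc of $A$ and a sub-arc of $B$). Fixing any $\rho$ with $0<A_\rho(\Delta)<\infty$, I would apply Cauchy--Schwarz on each circular slice $\gamma_r=\Delta\cap\{|z-p|=r\}$ to get $\ell_\rho(\gamma_r)^2\le 2\pi r^2\!\int_{\theta}\rho\,d\theta$, and then $\int_\varepsilon^\delta\ell_\rho(\gamma_r)^2\,\tfrac{dr}{r}\le 2\pi A_\rho(\Delta)$; since $\int_\varepsilon^\delta\tfrac{dr}{r}=\log(\delta/\varepsilon)$, some slice $\gamma_{r_*}$ (a competing curve, for $\delta$ small) has $\ell_\rho(\gamma_{r_*})^2\le 2\pi A_\rho(\Delta)/\log(\delta/\varepsilon)\to0$ as $\varepsilon\to0$, so $\inf_\gamma\ell_\rho(\gamma)=0$ for every $\rho$ and ${\rm Ext}_\Delta(A,B)=0$. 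For (4), with $B=\{p\}$ and $r_0<{\rm dist}(A,B)$, the test metric $\rho=\big(|z-p|\log(1/|z-p|)\big)^{-2}$ on $\{0<|z-p|<r_0\}$ (and $0$ elsewhere) has $A_\rho(\Delta)<\infty$ while every curve terminating at $p$ has infinite $\rho$-length, forcing ${\rm Ext}_\Delta(A,B)=\infty$. For the rectangle (5), $\rho\equiv1$ gives the lower bound $\ge b/a$ (any curve from $B_1$ to $B_3$ has Euclidean length $\ge b$, and $A_\rho=ab$), while testing an arbitrary $\rho$ against the family of straight segments parallel to the $b$-sides and applying Cauchy--Schwarz in the transverse variable gives the matching upper bound; interchanging the two pairs of sides yields the reciprocal identity.

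For the monotonicity principles (6) and (7) the idea is to transplant metrics between the two domains. In (6), restricting a competing metric $\rho$ from $\Delta_2$ to $\Delta_1$ can only decrease area, $A_\rho(\Delta_1)\le A_\rho(\Delta_2)$, whereas every $(\Delta_1,A_1,B_1)$-competitor is also a $(\Delta_2,A_2,B_2)$-competitor, so the minimal length does not decrease; dividing and taking the supremum over $\rho$ gives ${\rm Ext}_{\Delta_2}(A_2,B_2)\le{\rm Ext}_{\Delta_1}(A_1,B_1)$. Strictness under the stated hypotheses I would take from \cite{Ah73}, where it follows by analyzing when the two inequalities above degenerate to equalities: this would force the (essentially unique) extremal metric of the smaller configuration to be extremal for the larger one, which is incompatible with an extremal metric being positive almost everywhere with shortest competing curves filling the domain. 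In (7) one instead extends a competing metric $\rho$ from $\Delta_1$ to $\Delta_2$ by zero, so $A_\rho(\Delta_2)=A_\rho(\Delta_1)$; the separation hypothesis, combined with $\Delta_1$ being simply connected and the part of $\partial\Delta_1$ outside $A_1\cup B_1$ lying on $\partial\Delta_2$ (so competing curves cannot escape $\overline{\Delta_1}$ across it), shows that the portion inside $\overline{\Delta_1}$ of any $(\Delta_2,A_2,B_2)$-competitor contains a $(\Delta_1,A_1,B_1)$-competitor, so the minimal $\rho$-length does not decrease and ${\rm Ext}_{\Delta_1}(A_1,B_1)\le{\rm Ext}_{\Delta_2}(A_2,B_2)$.

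The one assertion requiring genuine care is the continuity statement (2). The route I would take is to reduce to topological rectangles: in every application $\Delta$ is conformally a quadrilateral with $A$ and $B$ among its four sides, so ${\rm Ext}_\Delta(A,B)$ equals the conformal modulus of that quadrilateral, with extremal metric $|f'|$ where $f$ maps $\Delta$ onto a Euclidean rectangle. Continuity of ${\rm Ext}$ then reduces to continuity of the modulus of a quadrilateral under perturbation of its four sides, which I would deduce from the Carath\'eodory kernel convergence theorem applied to the associated Riemann maps while tracking the four marked boundary arcs. I expect this reduction --- verifying Carath\'eodory convergence of the relevant domains and controlling the marked points --- to be the main technical obstacle; by contrast the length--area arguments behind (1) and (3)--(7) are routine.
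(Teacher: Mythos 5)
The paper offers no proof of this proposition: it is presented as a list of standard properties of extremal length with a pointer to Ahlfors \cite{Ah73}, so the comparison is with the classical arguments rather than with any text in the paper. Your length--area derivations are exactly those classical arguments and are correct: the metric transplantation $\rho\mapsto(\rho\circ f)|f'|^2$ for (1), the circular-slice Cauchy--Schwarz estimate for (3), the test metric $\bigl(|z-p|\log(1/|z-p|)\bigr)^{-2}$ for (4), the two-sided length--area computation for (5), restriction of metrics for (6), and extension by zero for (7). Three spots in your sketch deserve tightening relative to how the proposition is actually used later in the paper. First, the \emph{strict} inequality in (6) is genuinely needed (it is what makes ${\rm Ext}_{P_{\ell m}}(Y_1,E_{st})$ strictly increasing in $\ell$ in the proof of Proposition 3.3); your appeal to uniqueness of the extremal metric is the right mechanism, and for the quadrilaterals occurring here it can be made concrete since the extremal metric is $|f'|^2$ for the map onto a Euclidean rectangle and is positive almost everywhere, so equality in the restriction step would force two genuinely different configurations to share an extremal metric. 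Second, for (2) the statement itself is vague about the topology on $(\Delta,A,B)$; in the only places continuity is invoked, the domains are the explicitly parameterized pentagons $P_{\ell m}$ (with some edges degenerating to points, which is why (3) and (4) are quoted alongside (2)), and there your Carath\'eodory-kernel route, or direct continuous dependence of the Schwarz--Christoffel data on $(\ell,m)$, closes the argument. Third, in (7) the hypothesis ``passes through $A_1$ and $B_1$'' must in effect be upgraded to ``contains a subarc in $\overline{\Delta_1}$ joining $A_1$ to $B_1$'' for the zero-extension argument to yield $\ell_\rho(\gamma_2)\ge\inf_{\gamma_1}\ell_\rho(\gamma_1)$; you flag this correctly, and it holds for the nested polygons to which the paper applies it. With those caveats made explicit, your proposal is a complete and standard proof of everything the paper uses.
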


\section{Proof of Theorem \ref{main theorem}}

Based on the images from Figures \ref{conelike} and \ref{C4fig}, we assume $M_{st}$ consists of two planar disks emanating from the two horizontal edges of $T_{st}$ as well as four minimal disks, which may or may not be planar, emanating from the four non-horizontal edges of $T_{st}$.  Additionally, we assume $M_{st}$, like $T_{st}$, is symmetric with respect to reflection through the planes $x=0$ and $y=0$.  Thus, to prove the existence $M_{st}$ we only need to prove the
existence of one of the minimal disks emanating from a non-horizontal edge of $T_{st}$.  We label this fundamental disk $\hat{M}_{st}$, and we assume it emanates from the edge $E_{st}$ with vertices $(0,-t,\sqrt{1-s^2-t^2}/2)$ and $(s,0,-\sqrt{1-s^2-t^2}/2)$.  In Figures \ref{conelike} and \ref{C4fig}, the disk $\hat{M}_{st}$ is the front, left, quarter of the non-flat part of each soap film.

As we will show below, the fundamental disk $\hat{M}_{st}$ will be a minimal triangle, quadrilateral or pentagon depending on the location of $(s,t)$ in $\mathcal{Q}$, but in all cases it will contain the three boundary curves $Y_1$, $Y_3$ and $E_{st}$, where

$\centerdot$\ \ $Y_1\subset\{(x,y,x_3)\ |\ x=0\}$ is a $Y$-singularity,

$\centerdot$\ \ $Y_3\subset\{(x,y,x_3)\ |\ y=0\}$ is a $Y$-singularity, and

$\centerdot$\ \ $E_{st}$ is the edge of $T_{st}$ in the direction
$v_{st}=<s,t,-\sqrt{1-s^2-t^2}>$.\\
The properties of these three curves imply that the outward
pointing normal $N$ on $\hat{M}_{st}$ makes a constant angle of
$60^\circ$ with the vector $<1,0,0>$, $<0,1,0>$ along $Y_1$, $Y_3$, respectively,
and a constant angle of $90^\circ$ with $v_{st}$ along $E_{st}$.  Thus, the
image of $N(\hat{M}_{st})$  under stereographic projection \[
\sigma=\frac{x}{1-x_3}+i\frac{y}{1-x_3} \] is such that

$\centerdot$\ \ $\sigma\circ N(Y_1)\subset\Gamma_1=\partial D(2,\sqrt{3})$

$\centerdot$\ \ $\sigma\circ N(Y_3)\subset\Gamma_2=\partial D(i2,\sqrt{3})$

$\centerdot$\ \ $\sigma\circ N(E_{st})\subset\Gamma_{st}=\partial
D\left(\frac{s+it}{\sqrt{1-s^2-t^2}},\frac{1}{\sqrt{1-s^2-t^2}}\right)\
,$\\ where $D(z,r)$ is the open disk centered at $z$ with radius
$r$.

The way the variable circle $\Gamma_{st}$ intersects the fixed circles $\Gamma_1$ and $\Gamma_2$ determines the number of boundary edges as well as the qualitative properties of $\hat{M}_{st}$.  These intersection differences are recorded in the following proposition.  Here, it is important to note the intersection points \begin{equation}\label{p1p2} p_1=1-\frac{1}{\sqrt{2}}+i\left(1-\frac{1}{\sqrt{2}}\right)\ \ \mbox{and}\ \ p_2=1+\frac{1}{\sqrt{2}}+i\left(1+\frac{1}{\sqrt{2}}\right) \end{equation} of the fixed circles $\Gamma_1$ and $\Gamma_2$ and the outer intersection points $x_0=2+\sqrt{3}$, $y_0=i(2+\sqrt{3})$, of $\Gamma_1$, $\Gamma_2$ with the real, imaginary, axis, respectively.

\begin{proposition}\label{Gammastprop}\begin{enumerate}
\item $p_1\ \ \mbox{lies strictly inside of}\ \ \Gamma_{st}$ for every $(s,t)\in\mathcal{Q}$.

\item  $(s,t)\in\Theta^+\Leftrightarrow p_2\ \ \mbox{lies strictly outside of}\ \ \Gamma_{st}$.

\item  $(s,t)\in F\Leftrightarrow  p_2\in\Gamma_{st}$

\item  $(s,t)\in\Theta^-\Leftrightarrow p_2$ lies strictly inside of $\Gamma_{st}$ and both $x_0$ and $y_0$ lie on or outside of $\Gamma_{st}$.

\item $(s,t)\in C_2^+\Leftrightarrow p_2$ lies strictly inside of $\Gamma_{st}$, $x_0$ lies on or outside of $\Gamma_{st}$ and $y_0$ lies strictly inside of $\Gamma_{st}$.

\item $(s,t)\in C_2^-\Leftrightarrow p_2$ lies strictly inside of $\Gamma_{st}$, $y_0$ lies on or outside of $\Gamma_{st}$ and $x_0$ lies strictly inside of $\Gamma_{st}$.

\item $(s,t)\in C_4\Leftrightarrow p_2$ lies strictly inside of $\Gamma_{st}$ and both $x_0$ and $y_0$ lie strictly inside of $\Gamma_{st}$.

\end{enumerate}
\end{proposition}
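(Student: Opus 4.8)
The plan is to reduce each of the seven assertions to the elementary question of on which side of the circle $\Gamma_{st}$ one of the four distinguished points $p_1,p_2,x_0,y_0$ lies, and then to settle that question with a single algebraic inequality. Put $u=\sqrt{1-s^2-t^2}>0$, so that $\Gamma_{st}=\partial D(c,1/u)$ with $c=(s+it)/u$. A point $w=a+ib\in\mathbb{C}$ lies strictly inside $\Gamma_{st}$ exactly when $|w-c|^2<1/u^2$; multiplying through by $u^2$, using $s^2+t^2=1-u^2$, and dividing by $u>0$ collapses this to
\begin{equation}\label{inside criterion}
u\,(a^2+b^2-1)<2(sa+tb),
\end{equation}
with equality characterizing $w\in\Gamma_{st}$ and the reversed inequality characterizing ``strictly outside $\Gamma_{st}$.'' Deriving (\ref{inside criterion}) is a short computation and is the first step.

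The second step is to substitute each distinguished point into (\ref{inside criterion}) and simplify. For $p_1$ one has $a=b=1-\tfrac1{\sqrt2}$ and $a^2+b^2=3-2\sqrt2<1$, so the left side of (\ref{inside criterion}) is negative while the right side equals $2(s+t)\bigl(1-\tfrac1{\sqrt2}\bigr)>0$ on $\mathcal{Q}$; hence $p_1$ lies strictly inside $\Gamma_{st}$ for every $(s,t)\in\mathcal{Q}$, which is part (1). For $p_2$, where $a=b=1+\tfrac1{\sqrt2}$ and $a^2+b^2-1=2(1+\sqrt2)$, the common factor $2(1+\sqrt2)$ cancels and (\ref{inside criterion}) becomes $u<(s+t)/\sqrt2$; since both sides are nonnegative, squaring gives the equivalent statement $3s^2+3t^2+2st>2$, so $p_2$ is strictly inside, on, or strictly outside $\Gamma_{st}$ according as $3s^2+3t^2+2st$ is greater than, equal to, or less than $2$ --- which is parts (2) and (3). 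For $x_0=2+\sqrt3$ one uses the identity $6+4\sqrt3=2\sqrt3(2+\sqrt3)$ to reduce (\ref{inside criterion}) to $\sqrt3\,u<s$, equivalently (again squaring nonnegative quantities) $4s^2+3t^2>3$; the computation for $y_0$ is the same after the symmetry $s\leftrightarrow t$, giving $3s^2+4t^2>3$.

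The last step is bookkeeping. On the locus $3s^2+3t^2+2st<2$ (where $p_2$ is strictly outside $\Gamma_{st}$) we are in $\Theta^+$, and on the arc $3s^2+3t^2+2st=2$ (where $p_2\in\Gamma_{st}$) we are in $F$, matching (2) and (3). On the remaining locus $3s^2+3t^2+2st>2$ the signs of $4s^2+3t^2-3$ and $3s^2+4t^2-3$ --- equivalently, the positions of $x_0$ and $y_0$ relative to $\Gamma_{st}$ --- sort the points into the four cells $\Theta^-$, $C_2^+$, $C_2^-$, $C_4$ exactly as described in (4)--(7). I do not expect a genuine obstacle here; the only two places needing a word of care are the justification that the squaring steps in the second paragraph preserve the strict or non-strict inequalities (legitimate because in each case both sides are manifestly nonnegative on $\mathcal{Q}$), and the convention that ``inside an ellipse'' in the definitions of $\Theta^-$, $C_2^+$, $C_2^-$ is read to include the bounding arc, which is what makes the six sets an honest partition of $\mathcal{Q}$.
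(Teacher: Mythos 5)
Your proposal is correct and follows essentially the same route as the paper: both reduce each part to locating $p_1$, $p_2$, $x_0$, $y_0$ relative to $\Gamma_{st}$ by direct computation, arriving at exactly the same inequalities ($p_1$ always inside; $p_2$ outside iff $3s^2+3t^2+2st<2$; $x_0$, $y_0$ on or outside iff $4s^2+3t^2\le 3$, $3s^2+4t^2\le 3$ respectively). Your single criterion $u(a^2+b^2-1)<2(sa+tb)$ is a tidy repackaging of the paper's point-by-point calculations, and your closing bookkeeping matches the level of detail at which the paper disposes of parts (5)--(7).
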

\begin{proof} First of all, for notational purposes we set \[ A=\sqrt{1-s^2-t^2}\ . \]  For part (1), we compute \[ \left| p_1-\left(\frac{s}{A}+i\frac{t}{A}\right)\right|^2<\frac{1}{A^2}\Leftrightarrow \left|1-\frac{1}{\sqrt{2}}-\frac{s}{A}+i\left(1-\frac{1}{\sqrt{2}}-\frac{t}{A}\right)\right|^2<\frac{1}{A^2} \] \[ \Leftrightarrow \frac{(A\sqrt{2}-A-s\sqrt{2})^2+(A\sqrt{2}-A-t\sqrt{2})^2}{2A^2}<\frac{1}{A^2}\Leftrightarrow (A\sqrt{2}-A-s\sqrt{2})^2+(A\sqrt{2}-A-t\sqrt{2})^2<1 \] \[ \Leftrightarrow (3-2\sqrt{2})A^2+s^2+t^2-(2-\sqrt{2})A(s+t)<1\Leftrightarrow 2(1-\sqrt{2})A<(2-\sqrt{2})(s+t)\ , \] and this last statement is true for every $(s,t)\in\mathcal{Q}$.

For parts (2) and (3), we compute \[ \left| p_2-\left(\frac{s}{A}+i\frac{t}{A}\right)\right|^2\ge\frac{1}{A^2}\Leftrightarrow \left|1+\frac{1}{\sqrt{2}}-\frac{s}{A}+i\left(1+\frac{1}{\sqrt{2}}-\frac{t}{A}\right)\right|^2\ge\frac{1}{A^2} \] \[ \Leftrightarrow \frac{(A\sqrt{2}+A-s\sqrt{2})^2+(A\sqrt{2}+A-t\sqrt{2})^2}{2A^2}\ge\frac{1}{A^2} \]  \[ \Leftrightarrow (1+\sqrt{2})^2A^2+s^2+t^2-\sqrt{2}(1+\sqrt{2})A(s+t)\ge 1\Leftrightarrow \sqrt{2}A\ge s+t\ , \] \[ \Leftrightarrow 2(1-s^2-t^2)\ge s^2+t^2+2st \Leftrightarrow 3s^2+3t^2+2st\le 2\ . \]  Thus, we have shown that $p_2$ lies strictly outside of $\Gamma_{st}$ if and only if $(s,t)\in\Theta^+$, and $p_2\in\Gamma_{st}$ if and only if $(s,t)\in F$.

The calculations for parts (2) and (3) also show that $p_2$ lies strictly inside $\Gamma_{st}$ if $(s,t)\in\Theta^-$.  Furthermore, we have \[ \left|x_0-\left(\frac{s}{A}+i\frac{t}{A}\right)\right|^2\ge\frac{1}{A^2}\Leftrightarrow (2+\sqrt{3})^2A^2-2(2+\sqrt{3})sA\ge A^2 \] \[ \Leftrightarrow (3+2\sqrt{3})A\ge (2+\sqrt{3})s\Leftrightarrow 4s^2+3t^2\le 3\ . \]  Similarly, we can show \[ \left|y_0-\left(\frac{s}{A}+i\frac{t}{A}\right)\right|^2\ge\frac{1}{A^2}\Leftrightarrow 3s^2+4t^2\le 3\ . \]  Thus, we have proven part (4).

Parts (5), (6) and (7) are proved similarly, and so the calculations are omitted.
\end{proof}

\subsection{Proof of parts (1) and (3)}

From parts (1) and (2) of Proposition \ref{Gammastprop} it follows that for $(s,t)\in\Theta^+$, we can take the image $\Omega_{st}=\sigma\circ
N(\hat{M}_{st})$ to be the curvilinear triangle which is the region common to the exterior of $\Gamma_{st}$ and the interiors of $\Gamma_1$ and $\Gamma_2$.  The edges of this triangle are $Y_1=\sigma\circ N(Y_1)$, $Y_3=\sigma\circ N(Y_3)$ and $E_{st}=\sigma\circ N(E_{st})$ (See Figure \ref{Thetaplus}).

\begin{figure}[h]\center\includegraphics[scale=0.7]{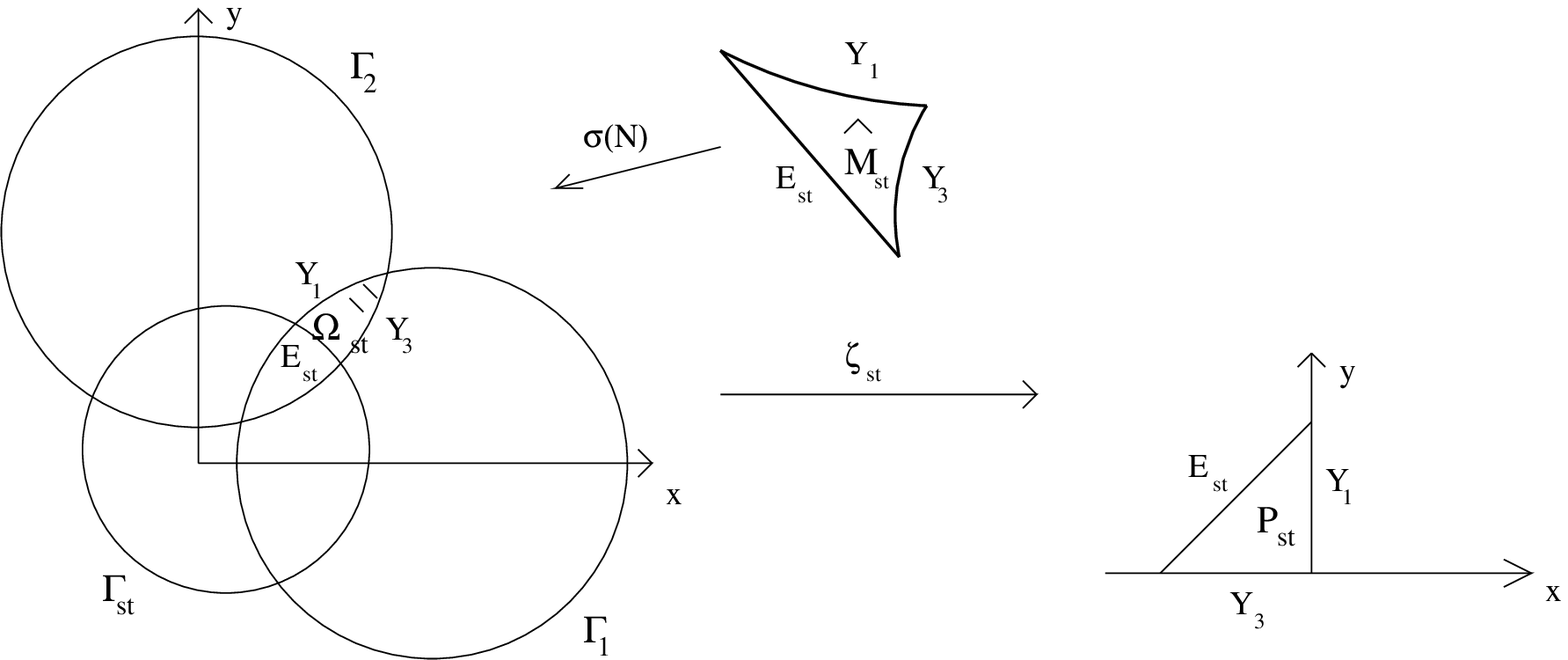}\caption{\label{Thetaplus} Center:  A fundamental piece $\hat{M}_{st}
$ in the case $(s,t)\in\Theta^+$. Left: The image $\Omega_{st}$ of the Gauss map followed by stereographic projection.  Right: The image $P_{st}$ of the map $\zeta_{st}$. }\end{figure}

From part (4) of Proposition \ref{Gammastprop}, it follows that, for $(s,t)\in\Theta^-$, we can take the Gauss image $\Omega_{st}$ to be the curvilinear triangle which is the region common to the \textit{interior} of $\Gamma_{st}$ and the \textit{exteriors} of $\Gamma_1$ and $\Gamma_2$ (See Figure \ref{Thetaminus}).

\begin{figure}[h]\center\includegraphics[scale=0.7]{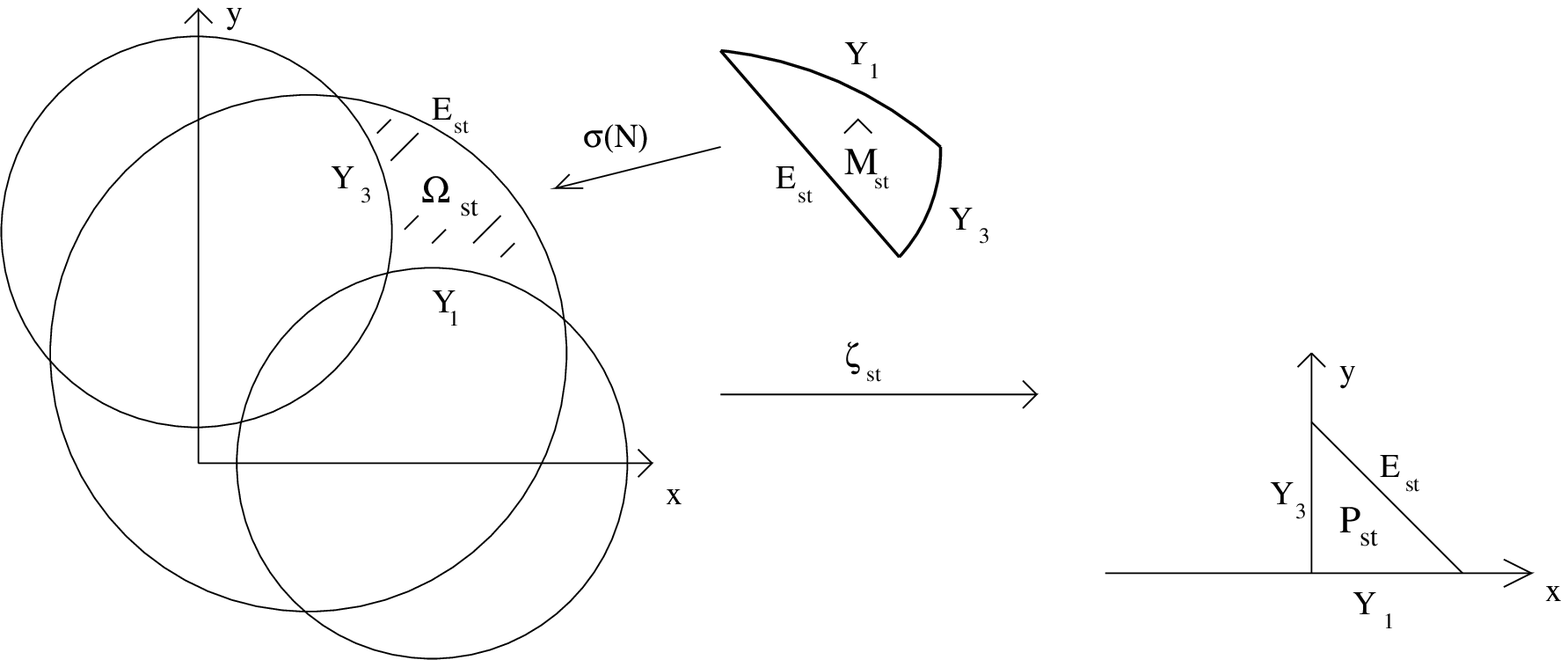}\caption{\label{Thetaminus} Center:  A fundamental piece $\hat{M}_{st}
$ in the case $(s,t)\in\Theta^-$. Left: The image $\Omega_{st}$ of the Gauss map followed by stereographic projection.  Right: The image $P_{st}$ of the map $\zeta_{st}$. }\end{figure}

To determine the image of the map $\zeta=\zeta_{st}$ defined in (\ref{zeta}), we first note the Euclidean line segment $E_{st}$ is clearly an asymptotic curve on $\hat{M}_{st}$.  Next, we have that the curves $Y_1$ and $Y_3$ are planar curves along which the Gauss map makes a constant angle with the plane of the curve.  Thus, from Joachimstahl's theorem \cite{Dc76} it follows these two curves are principal, and so we can use (\ref{pc}) and (\ref{ac}) to conclude

$\centerdot$\ \ $\zeta_{st}(Y_1)\subset\mathbb{C}$ is contained in a horizontal or vertical line,

$\centerdot$\ \ $\zeta_{st}(Y_3)\subset\mathbb{C}$ is contained in a horizontal or vertical line,

$\centerdot$\ \ $\zeta_{st}(E_{st})\subset\mathbb{C}$ is contained in a line parallel to $y=\pm x$.\\
Therefore, we conclude the image of $\zeta_{st}$ is a Euclidean triangle $P_{st}$ as shown on the right in Figures \ref{Thetaplus} and \ref{Thetaminus} with edges $Y_1=\zeta_{st}(Y_1)$, $Y_3=\zeta_{st}(Y_3)$ and $E_{st}=\zeta_{st}(E_{st})$.  In particular, if $(s,t)\in\Theta^+$, the properties $\hat{M}_{st}$ should possess imply $Y_1$ is vertical, $Y_3$ is horizontal and $E_{st}$ is parallel to $y=x$, while the opposite situation is expected if $(s,t)\in\Theta^-$.

At this point, we have derived a parameterization of $\hat{M}_{st}$ on $\Omega_{st}$ using the Weierstrass data $g(z)=z$ and $dh_{st}=(z(d\zeta_{st})^2)/dz$.  However, we have not yet proved such a parameterization exists.  To accomplish this, we must show that an edge preserving conformal map $\zeta_{st}$ exists between the fixed domain $\Omega_{st}$ and some Euclidean polygon $P_{st}$ as described above.  Now, the Riemann mapping theorem guarantees the existence of a conformal biholomorphism between any two simply connected polygons, and we have the freedom to specify the images of three vertices.  So, in this case the existence of the edge preserving conformal map $\zeta_{st}:\Omega_{st}\rightarrow P_{st}$ follows immediately.

\subsubsection{Verification of the parameterizations}

It now remains to verify that the image of our conformal, minimal immersion $X^{st}=(X_1^{st},X_2^{st},X_3^{st}):\Omega_{st}\rightarrow\mathbb{R}^3$ given by \begin{equation}\label{XCt} X^{st}(p)=\mbox{Re}\int_{p_2}^p \left(\frac{1}{2}(1-z^2),\frac{i}{2}(1+z^2),1\right)\frac{(d\zeta_{st})^2}{dz} \end{equation} is indeed the expected fundamental piece $\hat{M}_{st}$.  In what follows, we will use this form of the Weierstrass representation formula rather than that given in (\ref{wrf}).  Also, notice that we are using the intersection point $p_2$ of $Y_1\subset\Gamma_1$ and $Y_3\subset\Gamma_2$ as the base point of integration.

First, we need to show the image of $X^{st}$ is compact.  This can be accomplished if we can show the three one forms in (\ref{XCt}) are integrable on $\Omega_{st}$, and integrability will fail only if the one form $(d\zeta_{st})^2/dz$ has a non-integrable singularity at one of the three vertices.

At the vertex $p_2=Y_1\cap Y_3$, the map $\zeta_{st}$ takes an angle of $\psi_0=\arccos(1/3)$ on $\Omega_{st}$ to an angle of $\pi/2$ on $P_{st}$.  Thus, near $p_2$ we have $(d\zeta_{st})^2/dz=\xi(z)(z-p_2)^{(\pi-2\psi_0)/\psi_0}dz$, where $\xi(z)$ is holomorphic and non-zero on a neighborhood of $p_2$. Since the exponent $(\pi-2\psi_0)/\psi_0>-1$, it follows that $(d\zeta_{st})^2/dz$ is integrable at $p_2$.  Furthermore, we record here that $X^{st}$ takes the angle $\psi_0$ on $\Omega_{st}$ to an angle \begin{equation}\label{theta0} \theta_0=\arccos(-1/3) \end{equation} on the image $X^{st}(\Omega_{st})\subset\mathbb{R}^3$.  This is exactly what we expected since $X^{st}(p_2)$ should be a $T$-singularity.

Each of the remaining two angles $\psi_1$ and $\psi_2$ is mapped to an angle of $\pi/4$ on $P_{st}$.  Thus, the one form $(d\zeta_{st})^2/dz$ will be integrable at $v_1=Y_1\cap E_{st}$ and $v_2=Y_3\cap E_{st}$ if and only if $\psi_1,\psi_2\neq \pi/2$ for $(s,t)\in\Theta^+\cup\Theta^-$.  In fact, we can prove even more, which we do now in the following proposition.

\begin{proposition}\label{GammastGamma12} $\Gamma_{st}$ does not intersect $\Gamma_1$ or $\Gamma_2$ orthogonally for any $(s,t)\in\mathcal{Q}$.  \end{proposition}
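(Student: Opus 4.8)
The plan is to reduce the statement to the classical criterion for orthogonality of two circles and then to a one–line algebraic identity. Recall that two circles $\partial D(c_1,r_1)$ and $\partial D(c_2,r_2)$ in $\mathbb{C}$ meet orthogonally precisely when $|c_1-c_2|^2=r_1^2+r_2^2$; this follows by applying the Pythagorean theorem to the triangle whose vertices are the two centers and a point of intersection, since orthogonality of the circles at that point is the same as orthogonality of the two radii meeting there. Thus, to prove the proposition it suffices to show that this equality fails for every $(s,t)\in\mathcal{Q}$, both for $\Gamma_1$ and for $\Gamma_2$.

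First I would set $A=\sqrt{1-s^2-t^2}$ as in the proof of Proposition \ref{Gammastprop}, so that $\Gamma_{st}=\partial D\left(\frac{s+it}{A},\frac{1}{A}\right)$, and record the key identity $s^2+t^2-1=-A^2$. For $\Gamma_1=\partial D(2,\sqrt{3})$ the orthogonality condition $\left|\frac{s+it}{A}-2\right|^2=\frac{1}{A^2}+3$ expands to
\[ \frac{s^2+t^2}{A^2}-\frac{4s}{A}+4=\frac{1}{A^2}+3, \]
and substituting $s^2+t^2=1-A^2$ in the first term collapses this to $-\frac{4s}{A}=0$, i.e. $s=0$. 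Since every point of $\mathcal{Q}$ has $s>0$, the circle $\Gamma_{st}$ never meets $\Gamma_1$ orthogonally. The argument for $\Gamma_2=\partial D(2i,\sqrt{3})$ is identical after interchanging the roles of the real and imaginary parts: the orthogonality condition reduces to $t=0$, which is again excluded on $\mathcal{Q}$.

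There is essentially no obstacle here; the only point requiring a small amount of care is the reduction to the algebraic equality, namely that ``does not intersect orthogonally'' is equivalent to $|c_1-c_2|^2\neq r_1^2+r_2^2$ whether or not the circles intersect at all (if they are disjoint or tangent they a fortiori do not meet orthogonally, and the displayed computation shows that the equality case forces $s=0$ or $t=0$ in any event). Consequently the proposition follows directly from the computation above, with no appeal to extremal length or to the finer structure of $\Omega_{st}$.
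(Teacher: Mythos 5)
Your proof is correct and amounts to the same computation as the paper's, just packaged via the classical center--distance criterion: the paper evaluates the dot product of the two radius vectors at an intersection point and finds it equals $2s/A$, while you evaluate $|c_1-c_2|^2-r_1^2-r_2^2$ and find it equals $-4s/A$; by the law of cosines these two quantities always differ by the factor $-2$, so the arguments are equivalent. Your version has the minor advantage of never needing the coordinates of the intersection point (and of handling the non-intersecting case automatically), which you correctly flag as the only point requiring care.
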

\begin{proof} The circles $\Gamma_{st}$ and $\Gamma_1$ will be orthogonal if and only if the scalar product $<x_{st}-2,y_{st}>\cdot<x_{st}-s/A,y_{st}-t/A>$ is zero for some intersection point $(x_{st},y_{st})\in\Gamma_{st}\cap\Gamma_1$, where as before we set $A=\sqrt{1-s^2-t^2}$.  Computing, we have that this scalar product is equal to \begin{equation}\label{sp1} x_{st}^2-x_{st}\frac{s}{A}-2x_{st}+2\frac{s}{A}+y_{st}^2-y_{st}\frac{t}{A}\ . \end{equation}  Since $v_1\in\Gamma_1$, we have $x_{st}^2+y_{st}^2=-1+4x_{st}$.  Making this substitution into the expression (\ref{sp1}), we have \begin{equation}\label{sp2} <x_{st}-2,y_{st}>\cdot<x_{st}-s/A,y_{st}-t/A>\ =\ 4x_{st}-1-\frac{sx_{st}+ty_{st}}{A}-2x_{st}+2\frac{s}{A}\ . \end{equation}  At this point, we need the equation \[ sx_{st}+ty_{st}=A(2x_{st}-1)\ , \] which follows from the fact that $v_1$ is an intersection point of $\Gamma_{st}$ and $\Gamma_1$.  Incorporating this relationship into the right hand side of (\ref{sp2}), we have \begin{equation}\label{sp3}  <x_{st}-2,y_{st}>\cdot<x_{st}-s/A,y_{st}-t/A>\ =2\frac{s}{A}\ , \end{equation} and this is non-zero for all $(s,t)\in\mathcal{Q}$.  Similarly, we can also show $\Gamma_{st}$ does not intersect $\Gamma_2$ orthogonally for any $(s,t)\in\mathcal{Q}$.
\end{proof}

Now, the angles $\psi_1$ and $\psi_2$ are continuous on all of $\mathcal{Q}$.  Furthermore, we can compute at $(s,t)=(1/2,1/2)$ that $\psi_1=\psi_2=\arccos(1/\sqrt{3})$, and this angle is less than $\pi/2$.  Thus, from Proposition \ref{GammastGamma12} it follows that $\psi_1$ and $\psi_2$ are less than $\pi/2$ for all $(s,t)\in\mathcal{Q}$.  Therefore, the one form $(d\zeta_{st})^2/dz$ is integrable at $v_1$ and $v_2$, and we have shown \begin{equation}\label{compactimage} X^{st}(\Omega_{st})\ \mbox{is compact}\ . \end{equation}

Next, we analyze $X^{st}$ on $\partial\Omega_{st}$ to ensure the boundary of the image in $\mathbb{R}^3$ has the geometric properties we expect. Beginning with $Y_1$ and $(s,t)\in\Theta^+$, we parameterize in the counterclockwise direction from $p_2$ to $v_1$ by $z_1(w)=2+\sqrt{3}e^{iw}$.  The value of the parameter $w$ at $p_2$ is $\arccos((1-\sqrt{2})/\sqrt{6})$, while the value at $v_1$ is always less than $(5\pi)/6$.  To prove the former statement, we just compute the angle $p_2-2=1/\sqrt{2}-1+i(1/\sqrt{2}+1)$ makes with the positive $x$-axis.  To prove the latter statement, we first show that $\Gamma_{st}$ always contains the points $x+iy$ on the unit circle where $x>0$ and $y>0$.  Computing, we have \[ \left|x-\frac{s}{A}+i\left(y-\frac{t}{A}\right)\right|^2<\frac{1}{A^2}\Leftrightarrow xs+yt>0\ , \] and this is true if $s,t,x,y>0$.  Thus, the value of $w$ at $v_1$ is less than the value of $w$ at the intersection point $z=1/2+i\sqrt{3}/2$ of $\Gamma_1$ with the unit circle, and this value is $(5\pi)/6$.  Therefore, we have that \[ \arccos((1-\sqrt{2})/\sqrt{6})\le w< (5\pi)/6\ . \]

Furthermore, we have $dz(\dot{z}_1)=i\sqrt{3}e^{iw}$ and $d\zeta_{st}(\dot{z}_1)^2< 0$ on the interior of $Y_1$.  Computing, we have \[ dX_1^{st}(\dot{z}_1)=\mbox{Re}\frac{1}{2}\left((1-z_1^2)\frac{d\zeta_{st}(\dot{z}_1)^2}{dz(\dot{z}_1)}\right)= \] \begin{equation}\label{Xt1z1} = -\ \frac{(d\zeta_{st}(\dot{z}_1)^2}{2\sqrt{3}}\mbox{Re}(ie^{-iw}(-3-4\sqrt{3}e^{iw}-3e^{i2w}))= 0\ . \end{equation}  Continuing, we have \[ dX_2^{st}(\dot{z}_1)=\mbox{Re}\frac{1}{2}\left(i(1+z_1^2)\frac{d\zeta_{st}(\dot{z}_1)^2}{dz(\dot{z}_1)}\right)= \] \begin{equation}\label{Xt2z1} = \frac{d\zeta_{st}(\dot{z}_1)^2}{2\sqrt{3}}\mbox{Re}(e^{-iw}(5+4\sqrt{3}e^{iw}+3e^{i2w}))= \frac{2d\zeta_{st}(\dot{z}_1)^2}{\sqrt{3}}(2\cos w+\sqrt{3}) < 0  \end{equation} on the interior of $Y_1$.  The inequality at the end of (\ref{Xt2z1}) follows from the fact that the expression $2\cos w+\sqrt{3}$ is always positive since $\arccos((1-\sqrt{2})/\sqrt{6})\le w <(5\pi)/6$.  For the $x_3$ component, we have \[ dX_3^{st}(\dot{z}_1)=\mbox{Re}\left(\frac{z_1d\zeta_{st}(\dot{z}_1)^2}{dz(\dot{z}_1)}\right)= \] \begin{equation}\label{Xt3z1} = -\ \frac{d\zeta_{st}(\dot{z}_1)^2}{\sqrt{3}}\mbox{Re}(ie^{-iw}(2+\sqrt{3}e^{iw}))= -\ \frac{2d\zeta_{st}(\dot{z}_1)^2}{\sqrt{3}}(\sin w) > 0 \end{equation} on the interior of $Y_1$.

Equations (\ref{Xt1z1})-(\ref{Xt3z1}) imply $X^{st}(Y_1)$ is the graph of some decreasing function $g_1$ in the $yx_3$-plane, where $X^{st}_3=g_1(X^{st}_2)$.  Furthermore, we can compute \[ g_1'(X^{st}_2)=\frac{(X^{st}_3)'}{(X^{st}_2)'} = -\ \frac{\sin w}{2\cos w + \sqrt{3}}\ , \] and so \begin{equation}\label{g1''} g_1^{\prime\prime}(X^{st}_2)=\frac{((X^{st}_3)'/(X^{st}_2)')'}{(X^{st}_2)'} = -\ \frac{2+\sqrt{3}\cos w}{(2\cos w+ \sqrt{3})^2(X^{st}_2)'} > 0 \end{equation} on the interior of $Y_1$.  Therefore, we have shown \[ X^{st}(Y_1), (s,t)\in\Theta^+,\ \mbox{is the graph of a decreasing,} \] \begin{equation}\label{Xtf1} \mbox{concave upward function in the}\ yx_3-\mbox{plane}\ . \end{equation}

For $(s,t)\in\Theta^-$, we have that $z_1$ parameterizes $Y_1$ from $v_1$ to $p_2$ and $d\zeta_{st}(\dot{z_1})^2> 0$ on the interior of $Y_1$.  The parameter $w$ in this case is such that \[ 0\le w\le\arccos((1-\sqrt{2})/\sqrt{6})\ . \]  In fact, the property $w\ge 0$ is the defining property of $\Theta^-$, as we will see below.  The formulas for our calculations are the same as in (\ref{Xt1z1})-(\ref{g1''}), but the inequalities are reversed and the conclusion is  \[ X^{st}(Y_1), (s,t)\in\Theta^-,\ \mbox{is the graph of a decreasing,} \] \begin{equation}\label{Xtf1a} \mbox{concave downward function in the}\ yx_3-\mbox{plane}\ . \end{equation}  In particular, to obtain the reverse inequality at the end of (\ref{Xt3z1}), it is crucial that $4s^2+3t^2\le 3$.  For, when $4s^2+3t^2>3$ we have that $\Gamma_{st}$ intersects $\Gamma_1$ \textit{below} the $x$-axis.  Thus, the parameter $w$ takes on negative values near zero, and this implies the expression on the right in (\ref{Xt3z1}) is positive near the vertex $X^{st}(v_1)$ of the tetrahedron $T_{st}$.  This in turn implies the curve $X^{st}(Y_1)$ is increasing near $X^{st}(v_1)$, which means we can no longer extend the fundamental piece to a soap film spanning $T_{st}$.  If $4s^2+3t^2=3$, then $w$ takes on the value zero at the vertex $v_1$, and (\ref{Xt2z1}) and (\ref{Xt3z1}), with the inequalities reversed, imply $Y_1$ meets the top edge of $T_{st}$ tangentially.

In both cases, we have  \[ dX^{st}(\dot{z}_1)=\frac{d\zeta_{st}(\dot{z}_1)^2}{\sqrt{3}}\left<0,\frac{1+\sqrt{2}}{\sqrt{3}},-\ \frac{1+\sqrt{2}}{\sqrt{6}}\right> \] at $w=\arccos((1-\sqrt{2})/\sqrt{6})$.  From this we compute that the angle between the $x_3$-axis and the tangent line to $Y_1$ at $X^{st}(p_2)$ is $\theta_0^x=\arccos(1/\sqrt{3})$.  Thus, we have \begin{equation}\label{theta0x} 2\theta_0^x=\arccos(-1/3)\ , \end{equation} where $2\theta_0^x$ is the angle between $X^{st}(Y_1)$ and its image under reflection through the $xx_3$-plane.

We next parameterize the curve $Y_3$ for $(s,t)\in\Theta^+$ in the counterclockwise direction from $v_2$ to $p_2$ by $z_2(w)=i2+\sqrt{3}e^{iw}$, and in this case the parameter $w$ satisfies the inequality \[ -\pi/3<w\le\pi/2-\arccos((1-\sqrt{2})/\sqrt{6})\ . \]  Here, we have that $d\zeta_{st}(\dot{z}_2)^2> 0$ on the interior of $Y_3$.  Calculating as above, we have \[ dX_1^{st}(\dot{z}_2)=\mbox{Re}\frac{1}{2}\left((1-z_2^2)\frac{d\zeta_{st}(\dot{z}_2)^2}{dz(\dot{z}_2)}\right)= \] \begin{equation}\label{Xt1z2} = -\ \frac{d\zeta_{st}(\dot{z}_2)^2}{2\sqrt{3}}\mbox{Re}(ie^{-iw}(5-i4\sqrt{3}e^{iw}-3e^{i2w}))= -\ \frac{2d\zeta_{st}(\dot{z}_2)^2}{\sqrt{3}}(2\sin w+\sqrt{3})< 0  \end{equation} on the interior of $Y_3$.  Continuing, we have \[ dX_2^{st}(\dot{z}_2)=\mbox{Re}\frac{1}{2}\left(i(1+z_2^2)\frac{d\zeta_{st}(\dot{z}_2)^2}{dz(\dot{z}_2)}\right)= \] \begin{equation}\label{Xt2z2} = \frac{d\zeta_{st}(\dot{z}_2)^2}{2\sqrt{3}}\mbox{Re}(e^{-iw}(-3+i4\sqrt{3}e^{iw}+3e^{i2w}))= 0\ . \end{equation}  For the $x_3$ component, we have \[ dX_3^{st}(\dot{z}_2)=\mbox{Re}\left(\frac{z_2d\zeta_{st}(\dot{z}_2)^2}{dz(\dot{z}_2)}\right)= \] \begin{equation}\label{Xt3z2} = -\ \frac{d\zeta_{st}(\dot{z}_2)^2}{\sqrt{3}}\mbox{Re}(ie^{-iw}(i2+\sqrt{3}e^{iw}))= \frac{2d\zeta_{st}(\dot{z}_1)^2}{\sqrt{3}}(\cos w) > 0 \end{equation} on the interior of $Y_3$.

Equations (\ref{Xt1z2})-(\ref{Xt3z2}) imply $X^{st}(Y_3)$ is the graph of some decreasing function $g_2$ in the $xx_3$-plane, where $X^{st}_3=g_2(X^{st}_1)$.  Furthermore, we can compute \[ g_2'(X^{st}_1)=\frac{(X^{st}_3)'}{(X^{st}_1)'} = -\ \frac{\cos w}{2\sin w + \sqrt{3}}\ , \] and so \begin{equation}\label{g2''} g_2^{\prime\prime}(X^{st}_1)=\frac{((X^{st}_3)'/(X^{st}_1)')'}{(X^{st}_1)'} = \frac{2+\sqrt{3}\sin w}{(2\sin w+ \sqrt{3})^2(X^{st}_1)'} < 0 \end{equation} on the interior of $Y_3$.  Therefore, we have shown \[ X^{st}(Y_3), 1/2<t\le\sqrt{3/7},\ \mbox{is the graph of a decreasing,} \] \begin{equation}\label{Xtf2} \mbox{concave downward function in the}\ xx_3-\mbox{plane}\ . \end{equation}

For $(s,t)\in\Theta^-$, we have that $z_2$ parameterizes $Y_3$ from $p_2$ to $v_2$ and $d\zeta_{st}(\dot{z_2})^2< 0$ on the interior of $Y_3$.  The parameter $w$ in this case is such that \[ \pi/2-\arccos((1-\sqrt{2})/\sqrt{6})\le w \le \pi/2\ . \]  The formulas for our calculations are the same as in (\ref{Xt1z2})-(\ref{g2''}), but the inequalities are reversed and the conclusion is  \[ X^{st}(Y_3), (s,t)\in\Theta^-,\ \mbox{is the graph of a decreasing,} \] \begin{equation}\label{Xtf2a} \mbox{concave upward function in the}\ xx_3-\mbox{plane}\ . \end{equation}  Similar to $Y_1$, we cannot extend our fundamental piece to a soap film spanning $T_{st}$ if $3s^2+4t^2>3$, and if $3s^2+4t^2=3$ we have that $Y_3$ meets the bottom edge of $T_{st}$ tangentially.

As with $Y_1$, in both cases we have  \[ dX^{st}(\dot{z}_2)=\frac{d\zeta_{st}(\dot{z}_2)^2}{\sqrt{3}}\left<-\ \frac{1+\sqrt{2}}{\sqrt{3}},0,\frac{1+\sqrt{2}}{\sqrt{6}}\right> \] at $w=\pi/2-\arccos((1-\sqrt{2})/\sqrt{6})$.  From this we compute that the angle between the $x_3$-axis and the tangent line to $Y_3$ at $X^{st}(p_2)$ is $\theta_0^y=\arccos(1/\sqrt{3})$.  Thus, we have \begin{equation}\label{theta0y} 2\theta_0^y=\arccos(-1/3)\ , \end{equation} where $2\theta_0^y$ is the angle between $X^{st}(Y_1)$ and its image under reflection through the $yx_3$-plane.  Therefore, combining equations (\ref{theta0}), (\ref{theta0x}) and (\ref{theta0y}), we have that the point \begin{equation}\label{Tsingularity} X^{st}(p_2)\ \ \mbox{is a}\ T-\mbox{singularity}\ . \end{equation}

The third and final curve to verify is $E_{st}$, which for $(s,t)\in\Theta^+$ we parameterize in the counterclockwise direction from $v_2$ to $v_1$ by $z_3(w)=(s+it)/\sqrt{1-s^2-t^2}+e^{iw}/\sqrt{1-s^2-t^2}$.  Here, we have $\dot{z}_3(w)=ie^{iw}/\sqrt{1-s^2-t^2}$, and $d\zeta_{st}(\dot{z}_3)^2=i|d\zeta_{st}(\dot{z}_3)|^2$.  Computing, we have \[ dX_1^{st}(\dot{z}_3)= -\ \frac{|d\zeta_{st}(\dot{z}_3)|^2}{2\sqrt{1-s^2-t^2}}\mbox{Re}(e^{-iw}(1-2s^2-e^{i2w}-i2st-2se^{iw}-i2te^{iw}))= \] \begin{equation}\label{Xt1z3} =-\ \frac{s|d\zeta_{st}(\dot{z}_3)|^2}{\sqrt{1-s^2-t^2}}(1+s\cos w+t\sin w)<0 \end{equation} on the interior of $E_{st}$.  The inequality follows because the minimum value of $f(w)=1+s\cos w+t\sin w$ is $1-\sqrt{s^2+t^2}$.  Thus, we have that $f(w)$ is always positive if $(s,t)\in\mathcal{Q}$.

Continuing, we have \[ dX_2^{st}(\dot{z}_3)= \frac{|d\zeta_{st}(\dot{z}_3)|^2}{2\sqrt{1-s^2-t^2}}\mbox{Re}(ie^{-iw}(1-2t^2+e^{i2w}+i2st+2se^{iw}+i2te^{iw}))= \] \begin{equation}\label{Xt2z3} = -\ \frac{t|d\zeta_{st}(\dot{z}_3)|^2}{\sqrt{1-s^2-t^2}}(1+s\cos w+t\sin w)<0 \end{equation} on the interior of $E_{st}$.  For the $x_3$ component, we have \[ dX_3^{st}(\dot{z}_3)=|d\zeta_{st}(\dot{z}_3)|^2\mbox{Re}(e^{-iw}(s+it+e^{iw}))= \] \begin{equation}\label{Xt3z3} = |d\zeta_{st}(\dot{z}_3)|^2(1+s\cos w+t\sin w)>0 \end{equation} on the interior of $E_{st}$.

From equations (\ref{Xt1z3})-(\ref{Xt3z3}) we have \[ \left<dX^{st}_1(\dot{z}_3),dX^{st}_2(\dot{z}_3),dX^{st}_3(\dot{z}_3)\right>= \] \[ -\ \frac{|d\zeta_{st}(\dot{z}_3)|^2}{\sqrt{1-s^2-t^2}}(1+s\cos w+t\sin w)\left<s,t,-\sqrt{1-s^2-t^2}\right>\ . \]  Thus, for $(s,t)\in\Theta^+$, the immersion $X^{st}$ maps $E_{st}\subset\Omega_{st}$ monotonically onto a Euclidean line segment in the direction $v_{st}=<s,t,-\sqrt{1-s^2-t^2}>$ of the edge $E_{st}$ of the tetrahedron $T_{st}$.  If we want $X^{st}(E_{st})$ to have length one, we simply scale $\mathbb{R}^3$ by the appropriate constant $\lambda$, which is equivalent to scaling $P_{st}$ and $\zeta_{st}$ by $\sqrt{\lambda}$.

If $(s,t)\in\Theta^-$, then $z_3$ parameterizes $E_{st}$ from $v_1$ to $v_2$ and $d\zeta_{st}(\dot{z}_3)^2=-i|d\zeta_{st}(\dot{z}_3)|^2$.  In this case, the calculations are similar and the conclusions are exactly the same as for $(s,t)\in\Theta^+$.  So, we omit their repetition.

We must now show our fundamental piece $\hat{M}_{st}=X^{st}(\Omega_{st})$ does not have any self intersections and that no intersections will be introduced upon extension to a soap film $M_{st}$ spanning $T_{st}$.  From the above calculations, we have that the boundary of $X^{st}(\Omega_{st})$ has a one to one projection onto the boundary of a convex polygon in the $xy$-plane.  Thus, by a theorem of Rad\'{o} \cite{ra1} it follows that the minimal surface $X^{st}(\Omega_{st})$ is a graph over this convex polygon.  In particular, the surface has no self intersections and no intersections are introduced upon extension.

Finally, from statements (\ref{Xtf1}) and (\ref{Xtf2}) it follows that $\arccos(-1/3)$ is greater than the angle between $E_1$ and $E_2$ and the angle between $E_3$ and $E_4$ for $(s,t)\in\Theta^+$.  Similarly, statements (\ref{Xtf1a}) and (\ref{Xtf2a}) imply $\arccos(-1/3)$ is less than the angle between $E_1$ and $E_2$ and the angle between $E_3$ and $E_4$ for $(s,t)\in\Theta^-$.  Here, the Euclidean segments $E_1$, $E_2$, $E_3$ and $E_4$ are the four segments from the $T$-singularity $X^{st}(p_2)$ to the vertices of $T_{st}$.

\subsubsection{Explicit parameterizations}

We first change coordinates via the conformal map $\Phi_{st}:\mathbb{H}\rightarrow\Omega_{st}$ normalized so that \[ \Phi_t(0)=p_2,\ \Phi_t(1)=v_1\ \ \mbox{and}\ \ \Phi_t(\infty)=v_2\ . \]  This gives a parameterization $Z^{st}=(Z_1^{st},Z_2^{st},Z_3^{st})$ for $\hat{M}_{st}$ on the upper half plane $\mathbb{H}$ with Weierstrass data \[ g_{st}=\Phi_{st}\ \mbox{and}\ dh_{st}=\frac{\Phi_{st}(d\Psi_{st})^2}{d\Phi_{st}}\ , \] where $\Psi_{st}=\zeta_{st}\circ\Phi_{st}$ is the conformal map from $\mathbb{H}$ onto the triangle $P_{st}$ normalized so that \[ \Psi_{st}(0)=0,\ \Psi_{st}(1)=v_1\ \mbox{and}\ \Psi_{st}(\infty)=v_2\ . \]  Since $P_{st}$ is Euclidean, the map $\Psi_{st}$ is a Schwarz-Christoffel map given by \[ \Psi_{st}(z)=A_{st}\int_0^zw^{-1/2}(w-1)^{-3/4}dw\ , \] where the constant \[ A_{st}=|A_{st}|e^{-i3\pi/4},\ \ (s,t)\in\Theta^+ \] \[ A_{st}=|A_{st}|e^{i3\pi/4},\ \ (s,t)\in\Theta^- \] is determined by the equation $|Z^{st}([1,\infty))|=|E_{st}|=1$.  Furthermore, the map $\mathcal{M}_{st}\circ\Phi_{st}$ can be made explicit in terms of hypergeometric functions since the domain $\Omega_{st}$ for $(s,t)\in\Theta^+\cup\Theta^-$ is a curvilinear triangle.  The map $\mathcal{M}_{st}$ is a M\"{o}bius transformation that normalizes $\Omega_{st}$ so that $p_2$ is the origin in $\mathbb{C}$ and the curves $Y_1$ and $Y_3$ are Euclidean segments with $Y_1$ lying along the positive $x$-axis.  With this normalization, the reader is referred to \cite{ca54} or \cite{hmc1} for the explicit formulas for $\mathcal{M}_{st}\circ\Phi_{st}$.

\subsection{Proof of parts (4), (5) and (6)}

If $(s,t)\in C_4$, then from part (7) of Proposition \ref{Gammastprop} it follows that $\Gamma_{st}$ does not intersect $\Gamma_1$ above the $x$-axis or $\Gamma_2$ to the right of the $y$-axis.  We could still define $\Omega_{st}$ as we did for $(s,t)\in\Theta^-$, and in doing so we would obtain a fundamental piece $\hat{M}_{st}$.  However, as we saw in the above proof of part (3), this fundamental piece would not extend to a soap film spanning $T_{st}$.  This will happen anytime the vertex $v_1\in\Gamma_{st}\cap\Gamma_1$ lies strictly below the $x$-axis or the vertex $v_2\in\Gamma_{st}\cap\Gamma_2$ lies strictly to the left of the $y$-axis.

We can get around this obstacle by introducing two edges into $\hat{M}_{st}$.  This is done by allowing the $Y$-singularities $Y_1$ and $Y_3$ to meet the top and bottom edges, respectively, of $T_{st}$ \textit{at an interior point} rather than a vertex.  Thus, we now have a subsegment of these tetrahedral edges contained in the boundary of $\hat{M}_{st}$.  The effect this has on the image of the Gauss map $\Omega_{st}$ is the introduction of two Euclidean line segments into its boundary - a horizontal segment $E^x_{st}$ along the $x$-axis that connects $\Gamma_1$ with $\Gamma_{st}$, and a vertical segment $E^y_{st}$ along the $y$-axis that connects $\Gamma_2$ and $\Gamma_{st}$.  So, in this case we define $\Omega_{st}$ to be the curvilinear pentagon that is the region common to the interior of $\Gamma_{st}$, the exteriors of $\Gamma_1$ and $\Gamma_2$, and the first quadrant in the complex plane (See Figure \ref{C4diagram}).

In this case, the spanning sets will have the property that two minimal surfaces meet along portions of the top or bottom edge of $T_{st}$.  We must show they meet at an angle greater than or equal to $120^\circ$.  For this, notice that if $(s,t)$ is on the boundary of $C_4$ and $C_2^+\cup C_2^-$ in $\mathcal{Q}$, then $v_1=2+\sqrt{3}$ or $v_2=i(2+\sqrt{3})$.  These correspond to the values where the outward pointing normal $N$ on the surface makes an angle of $30^\circ$ with the vector $<0,0,1>$.  Furthermore, this angle on the surface at $v_1$ or $v_2$ is less than $30^\circ$ for $v_1>2+\sqrt{3}$ or $-iv_2>2+\sqrt{3}$, since this corresponds to an upward rotation of the outward pointing normal.  This implies that $120^\circ$ is a lower bound for the angle at which two minimal surface meet along a top or bottom edge of $T_{st}$ for $(s,t)\in C_4$.  Thus, the spanning sets $M_{st}$ in this region will indeed be soap films.

For the image of $\zeta_{st}$, notice that the two boundary edges introduced in $\hat{M}_{st}$ are asymptotic curves.  Thus, we modify the triangle $P_{st}$ for $(s,t)\in\Theta^-$ by introducing two edges $E^x_{st}$ and $E^y_{st}$ parallel to the line $y=x$.  One of these edges connects $Y_1$ and $E_{st}$, while the other connects $Y_3$ and $E_{st}$.  Therefore, the map $\zeta_{st}$ is an edge-preserving conformal map between the curvilinear polygon $\Omega_{st}$ and some Euclidean pentagon $P_{st}$ whose edges are in the directions mentioned above. (See Figure \ref{C4diagram}).  The existence of such a map does not follow solely from the Riemann mapping theorem since the polygons under consideration in this case are five-sided instead of three-sided.

\begin{figure}[h]\center\includegraphics[scale=0.7]{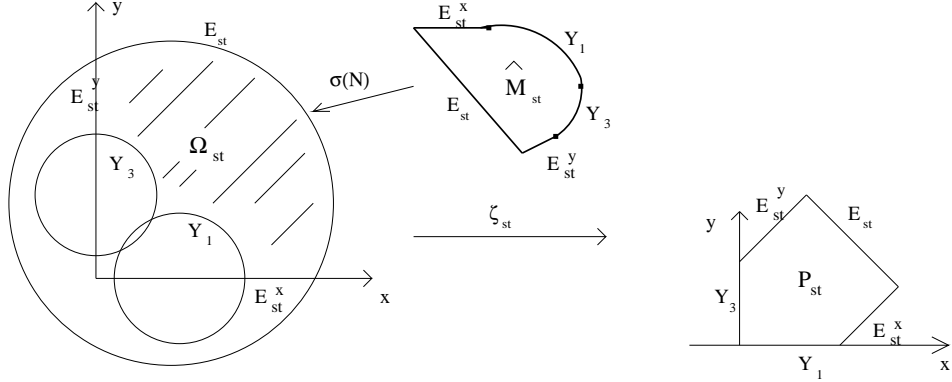}\caption{\label{C4diagram} Center:  A fundamental piece $\hat{M}_{st}
$ in the case $(s,t)\in C_4$. Left: The image $\Omega_{st}$ of the Gauss map followed by stereographic projection.  Right: The image $P_{st}$ of the map $\zeta_{st}$. }\end{figure}

To prove the existence of $\zeta_{st}$, consider the space $\mathcal{P}$ of Euclidean pentagons with edges oriented and labeled like $P_{st}$ in Figure \ref{C4diagram} and normalized so that $Y_1\cap Y_3$ is the origin in $\mathbb{C}$ and $|Y_1|=1$.  With this normalization, each pentagon $P=P_{\ell m}\in\mathcal{P}$ is uniquely determined by the lengths $\ell=|E^x_{st}|$ and $m=|E_{st}|$.  Thus, we can identify the space $\mathcal{P}$ with the domain \[ \mathcal{D}=\{ (\ell,m)\ |\ \ell>0\ \ \mbox{and}\ \ 1/\sqrt{2}<m<\ell+\sqrt{2}\} \] in the $\ell m$-plane.  With this notation, we state the following proposition.

\begin{proposition}\label{mainprop} For each $(s,t)\in C_4$, there exists an edge preserving
conformal map $\zeta_{s,t}$ from $\Omega_{s,t}$ onto some Euclidean polygon $P_{\ell m}\in\mathcal{P}$.
\end{proposition}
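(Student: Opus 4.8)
The plan is a continuity argument driven by extremal length. By the Riemann mapping theorem and Carath\'eodory's extension theorem, the curvilinear pentagon $\Omega_{s,t}$ and every Euclidean pentagon $P_{\ell m}$, $(\ell,m)\in\mathcal D$, are each conformally equivalent, by a homeomorphism of closed regions, to the upper half plane $\mathbb H$; normalize each such uniformization so that three of the five vertices --- the same three in the cyclic ordering of Figure~\ref{C4diagram} --- go to $0$, $1$ and $\infty$. Then $\Omega_{s,t}$ determines a pair of ``free'' prevertices $(a_{st},b_{st})$ and each $P_{\ell m}$ determines a pair $(a(\ell,m),b(\ell,m))$, all lying in the boundary arc of $\mathbb H$ not containing $0,1,\infty$. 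Since a conformal automorphism of $\mathbb H$ fixing the five points $0,1,\infty,a_{st},b_{st}$ is the identity, it suffices to produce $(\ell,m)\in\mathcal D$ with $(a(\ell,m),b(\ell,m))=(a_{st},b_{st})$: the composition $\zeta_{s,t}=\psi_{\ell m}^{-1}\circ\phi_{s,t}$ of the two uniformizations then carries each of the five boundary arcs of $\Omega_{s,t}$ onto the correspondingly labelled edge of $P_{\ell m}$, which is exactly what is required.

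To phrase this in terms of Proposition~\ref{extremal length proposition}, I would encode each free prevertex by a conformal invariant. Let $\alpha(\Delta)$ and $\beta(\Delta)$ be the extremal lengths in a pentagon $\Delta$ between two disjoint pairs of boundary edges chosen so that, read off in the half-plane model, $\alpha$ is a strictly monotone function of $a$ alone and $\beta$ a strictly monotone function of $b$ alone; for instance $\alpha=\mathrm{Ext}_\Delta(Y_1,\,E_{st}\cup E^y_{st})$ and $\beta=\mathrm{Ext}_\Delta(Y_1,\,E^y_{st})$, which have positive distance in an embedded pentagon of this combinatorial type. Then $\Phi(\ell,m)=\bigl(\alpha(P_{\ell m}),\beta(P_{\ell m})\bigr)$ records the conformal class of $P_{\ell m}$, and the problem becomes: show $\Phi(\mathcal D)$ contains $\bigl(\alpha(\Omega_{s,t}),\beta(\Omega_{s,t})\bigr)$. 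Two ingredients here are routine. First, $\Phi$ is continuous by part (2) of Proposition~\ref{extremal length proposition}, since $P_{\ell m}$ varies continuously with $(\ell,m)$. Second, $\Phi$ is injective: if two pentagons of $\mathcal P$ had the same prevertex data, the composition of one uniformization with the inverse of the other would be a conformal map between them sending each edge to the parallel edge; because every pentagon in $\mathcal P$ has the \emph{same} vertex angles (all edge directions are fixed throughout $\mathcal P$), the logarithm of the derivative of this map is holomorphic with boundary values of constant argument, so the map is a direction-preserving similarity, and the normalizations $Y_1\cap Y_3=0$, $|Y_1|=1$ force it to be the identity. Finally, $\bigl(\alpha(\Omega_{s,t}),\beta(\Omega_{s,t})\bigr)$ lies in the feasible set of conformal classes of pentagons of this type because $(s,t)\in C_4$: by part (7) of Proposition~\ref{Gammastprop} the circle $\Gamma_{s,t}$ genuinely crosses both coordinate axes inside the region bounded by $\Gamma_1$ and $\Gamma_2$, so $\Omega_{s,t}$ really is a nondegenerate curvilinear pentagon with the prescribed cyclic sequence of edges.

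The heart of the matter, and the main obstacle, is to prove that $\Phi$ is proper: as $(\ell,m)$ tends to $\partial\mathcal D$, the pair $\bigl(\alpha,\beta\bigr)$ must tend to the boundary of the feasible set. There are four modes of degeneration --- $\ell\to0$, $\ell\to\infty$, $m\to1/\sqrt2$, and $m\to\ell+\sqrt2$ --- and in each the pentagon $P_{\ell m}$ collapses onto a thin strip or has an edge shrink to a point. I would treat these by comparing $P_{\ell m}$ with a genuine rectangle via parts (5), (6) and (7) of Proposition~\ref{extremal length proposition}, so that the relevant extremal length is squeezed to $0$ or $\infty$ using parts (3) and (4); this forces one free prevertex to collide with a fixed prevertex (or with the other free prevertex), so $\Phi(\ell,m)$ leaves every compact subset of the feasible set. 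Granting properness, $\Phi$ is a continuous, injective, proper map from the connected planar domain $\mathcal D$ into the connected feasible set; by invariance of domain its image is open, by properness it is closed, hence it is the whole feasible set. In particular $\bigl(\alpha(\Omega_{s,t}),\beta(\Omega_{s,t})\bigr)$ is attained, yielding the required $(\ell,m)$ and the edge-preserving conformal map $\zeta_{s,t}$. The delicate steps will be making the rectangle comparisons near $\partial\mathcal D$ quantitative enough to pin down the limiting prevertex configurations, and verifying that the feasible set is exactly the connected region identified by this boundary analysis, so that the open-and-closed argument closes.
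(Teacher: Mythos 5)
Your strategy is genuinely different from the paper's. The paper never tries to show that the family $\{P_{\ell m}\}$ realizes \emph{every} conformal class of pentagon of this type. Instead it runs two separate intermediate-value arguments: for fixed $\ell$ it finds $\hat m=f(\ell)$ matching ${\rm Ext}(Y_1,E^y_{st})$ with that of $\Omega_{st}$ (using that this quantity tends to $0$ as $m\to 1/\sqrt2$ and to $\infty$ as $m\to\ell+\sqrt2$), and for fixed $m$ it finds $\hat\ell=g(m)$ matching ${\rm Ext}(Y_1,E_{st})$; it then shows the graphs of $f$ and $g$ must cross in $\mathcal D$, and finally upgrades the resulting Riemann map to an edge-preserving one by applying the strict monotonicity statement (part (6) of Proposition \ref{extremal length proposition}) twice. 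Your route --- encode the conformal class by two extremal lengths chosen so that each depends on only one free prevertex, then argue that $\Phi(\ell,m)=(\alpha,\beta)$ is continuous, injective and proper onto the open connected set of realizable classes, and invoke invariance of domain --- is a legitimate and arguably cleaner global alternative; your injectivity argument (matching prevertices plus matching angles forces a similarity, killed by the normalization) is correct, and your specific choices $\alpha={\rm Ext}(Y_1,E_{st}\cup E^y_{st})$, $\beta={\rm Ext}(Y_1,E^y_{st})$ do achieve the separation of variables, since each pair of arcs omits exactly one of the two free vertices.

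The genuine gap is that you have deferred precisely the step that constitutes the paper's proof: the properness of $\Phi$, i.e.\ the boundary asymptotics of the extremal lengths as $(\ell,m)\to\partial\mathcal D$, together with the identification of the feasible set. Saying you ``would treat these by comparing $P_{\ell m}$ with a genuine rectangle'' is a plan, not an argument, and the degenerations are not uniform in character: the paper has to handle $m\to 1/\sqrt2$, $m\to\ell+\sqrt2$, $\ell\to 0$ and $\ell\to\infty$, and in particular discovers that for $m>\sqrt2$ the limit of ${\rm Ext}_{P_{\ell m}}(Y_1,E_{st})$ as $\ell\to m-\sqrt2$ is a \emph{positive finite} number $A_m$, which only tends to $\infty$ as $m\to\infty$ after a rescaling argument on $(1/m)P_{\ell m}$. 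Your version must likewise control the unbounded directions of $\mathcal D$ (e.g.\ $\ell\to\infty$ with $m$ fixed, or $\ell,m\to\infty$ jointly) and the regime where the two free prevertices collide with each other rather than with a fixed one; this is where the quantitative work lives. Note also that your feasible set is not a product rectangle but the image of $\{1<a<b<\infty\}$, so the ``open-and-closed'' conclusion needs the endpoint behavior of $\alpha(a)$ and $\beta(b)$ as well. Until that boundary analysis is actually carried out, the proposal does not yet prove the proposition; if you do carry it out, you will essentially have reproduced the estimates in the paper's proof, packaged differently.
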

$\mathit{proof}$:  Fix $(s,t)\in C_4$ and $0<\ell<\infty$.  Then $|Y_3|\rightarrow 0$ and $|E^y_{st}|\rightarrow\ell+1/\sqrt{2}>0$ on $P_{\ell m}$ as $m\rightarrow 1/\sqrt{2}$, and so it follows from parts (2) and (3) of Proposition \ref{extremal length proposition} that
\[ Ext_{P_{\ell m}}(Y_1,E^y_{st})\rightarrow 0\ \ \mbox{as}\ \
m\rightarrow 1/\sqrt{2}\ . \] As $m\rightarrow \ell+\sqrt{2}$, we have that $|E^y_{st}|\rightarrow 0$ and $|Y_3|\rightarrow 1+\ell\sqrt{2}>0$ on $P_{\ell m}$.  Thus, it follows from parts (2) and (4) of Proposition \ref{extremal length proposition} that
\[ Ext_{P_{\ell m}}(Y_1,E^y_{st})\rightarrow\infty\ \ \mbox{as}\ \ m\rightarrow \ell+\sqrt{2}\ . \]
Therefore, by continuity there exists some intermediate
$\hat{m}=f(\ell)$ such that
\begin{equation}\label{Y1Eyst} Ext_{\Omega_{st}}(Y_1,E^y_{st})=Ext_{P_{\ell\hat{m}}}(Y_1,E^y_{st})\ . \end{equation}

Next, fix $1/\sqrt{2}<m\le\sqrt{2}$.  Arguing as before,
it follows that \begin{equation}\label{what} Ext_{P_{\ell m}}(Y_1,E_{st})\rightarrow 0\ \
\mbox{as}\ \ \ell\rightarrow 0\ . \end{equation} Furthermore, we have
\begin{equation}\label{hey} Ext_{P_{\ell m}}(Y_1,E_{st})\rightarrow\infty\ \ \mbox{as}\ \ \ell\rightarrow \infty\ .
\end{equation} To see this, consider the pentagon $(1/\ell)P_{\ell m}$.  As $\ell\rightarrow\infty$, this rescaled pentagon is such that $|Y_1|,|E_{st}|\rightarrow 0$ and $|E^x_{st}|\rightarrow 1$ as $\ell\rightarrow\infty$.  Statement (\ref{hey}) then follows by parts (2) and (4) of Proposition \ref{extremal length proposition}.  Therefore, there is an intermediate $\hat{\ell}=g(m)$
such that \begin{equation}\label{Y1Est}
Ext_{\Omega_{s,t}}(Y_1,E_{st})=Ext_{P_{\hat{\ell}m}}(Y_1,E_{st})\ .
\end{equation}

Statement (\ref{hey}) still holds for $m>\sqrt{2}$, but (\ref{what}) does not since $\ell=|E^x_{st}|$ no longer approaches zero.  More specifically, for fixed $m$ we have \[ Ext_{P_{\ell m}}(Y_1,E_{st})\rightarrow A_m>0\ \
\mbox{as}\ \ \ell\rightarrow m-\sqrt{2} \] and \[ A_m\rightarrow\infty\ \ \mbox{as}\ \ m\rightarrow\infty\ . \]  To see this last statement, consider the quadrilaterals $P_{\ell m}$ with $m>\sqrt{2}$ and $\ell=m-\sqrt{2}$.  As $m\rightarrow\infty$, rescaled quadrilaterals $(1/m)P_{\ell m}$ are such that $|Y_1|\rightarrow 0$ while $|Y_3|$, $|E_{st}|$ and $|E^x_{st}|$ all approach nonzero numbers.  Thus, it follows from parts (2) and (4) of Proposition \ref{extremal length proposition} that \[ A_m=Ext_{P_{\ell m}}(Y_1,E_{st})\rightarrow\infty\ \ \mbox{as}\ \ m\rightarrow\infty\ . \]

Furthermore, it follows from parts (5) and (6) of Proposition \ref{extremal length proposition} that $Ext_{P_{\ell m}}(Y_1,E_{st})$ increases for fixed $m$ and increasing $\ell$.  Thus, there is some $m_{st}<\infty$ such that function $g(m)=\hat{\ell}$ is only defined for $1/\sqrt{2}<m<m_{st}$, and the calculations and discussions above imply that \[ g(m)\rightarrow m_{st}-\sqrt{2}\ \ \mbox{as}\ \ m\rightarrow m_{st}\ . \]  Also, we must have that $g(m)$ is bounded away from infinity as $m\rightarrow 1/\sqrt{2}$, since otherwise we would have \[ Ext_{P_{\hat{\ell} m}}(Y_1,E_{st})\rightarrow\infty\ \ \mbox{as}\ \ m\rightarrow 1/\sqrt{2}\ . \]  This cannot happen because statement (\ref{Y1Est}) would not be true near $m=1/\sqrt{2}$.

Therefore, the graph of the continuous function $g(m)$ must intersect the graph of the continuous function $f(\ell)$ at some point $(\hat{\ell},\hat{m})$, and at this point we have that both (\ref{Y1Eyst}) and (\ref{Y1Est}) are true.

By the Riemann mapping theorem, there exists a conformal map
$\zeta_{st}$ from $\Omega_{s,t}$ onto $P_{\hat{\ell}\hat{m}}$, and we can normalize so
that
\begin{equation}\label{g1 zeta normalization} \zeta_{st}(Y_1)=Y_1\ \ \ \mbox{and}\ \ \ \zeta_{st}(Y_3)=Y_3\ .
\end{equation} Moreover, since (\ref{Y1Eyst}) holds, it follows
from part (6) of Proposition \ref{extremal length
proposition} that
\begin{equation}\label{Eystwoo} \zeta_{st}(E^y_{st})=E^y_{st}\ . \end{equation} Then, since (\ref{Y1Est}) also holds, it follows from part (6) of Proposition \ref{extremal length proposition} that \begin{equation}\label{Estwoo}
\zeta_{st}(E_{st})=E_{st}\ \ \mbox{and}\ \ \zeta_{st}(E^x_{st})=E^x_{st}. \end{equation}  Therefore,
from (\ref{g1 zeta normalization}), (\ref{Eystwoo}) and
(\ref{Estwoo}) we have that $\zeta_{st}$ is the desired conformal,
edge-preserving map of the proposition.\ \ \ $\Box$

We have now derived a parameterization $X^{st}$ on $\Omega_{st}$ for each $(s,t)\in C_4$.  To verify these parameterizations, we first check that $X^{st}(\Omega_{st})$ is still compact after the introduction of the edges $E^x_{st}$ and $E^y_{st}$.  For the one form $(d\zeta_{st})^2/dz$ to have a non-integrable singularity at one of the vertices $v_1=E^x_{st}\cap E_{st}$, $v_2=E^y_{st}\cap E_{st}$, $w_1=Y_1\cap E^x_{st}$ or $w_2=Y_3\cap E^y_{st}$, the domain $\Omega_{st}$ would need to have an angle of $\pi$ at $v_1$ or $v_2$ or an angle of $3\pi/2$ at $w_1$ or $w_2$.  The angle at $w_1$ and $w_2$ is $\pi/2$, and the angles at $v_1$ and $v_2$ are clearly less than $\pi$.  Thus, we have that \begin{equation}\label{C4compact} X^{st}(\Omega_{st})\ \ \mbox{is compact}\ . \end{equation}  Furthermore, we compute that the angle on the surface at $w_1$ and $w_2$ is $\pi$, so that $Y_1$, $Y_3$ meets the top, bottom edge, respectively, of $T_{st}$ tangentially.

To check $X^{st}$ on $\partial\Omega_{st}$, we first note that the calculations on $Y_1$, $Y_3$ and $E_{st}$ are the same as in the proof of part (3) of Theorem \ref{main theorem}, and so they are omitted here.  Parameterizing $E^x_{st}$ from $w_1$ to $v_1$ by $z_4(w)=w$, we have $\dot{z}_4\equiv 1$ and $d\zeta_{st}(\dot{z}_4)^2=i|d\zeta_{st}(\dot{z}_4)|^2$.  Computing, we have \begin{equation}\label{dX1stz4} dX^{st}_1(\dot{z}_4)=\frac{1}{2}\mbox{Re}((1-w^2)i|d\zeta_{st}(\dot{z}_4)|^2)=0\ , \end{equation} \begin{equation}\label{dX2stz4} dX^{st}_2(\dot{z}_4)=\frac{1}{2}\mbox{Re}(i(1+w^2)i|d\zeta_{st}(\dot{z}_4)|^2)=-\ \frac{1}{2}|d\zeta_{st}(\dot{z}_4)|^2(1+w^2)<0\ , \end{equation} and \begin{equation}\label{dX3stz4} dX^{st}_3(\dot{z}_4)=\mbox{Re}(wi|d\zeta_{st}(\dot{z}_4)|^2)=0\ . \end{equation}   Statements (\ref{dX1stz4}) - (\ref{dX3stz4}) imply $X^{st}$ maps $E^x_{st}$ monotonically onto a line segment in the direction of the $y$-axis.  Similarly, we have that $X^{st}$ maps $E^y_{st}$ monotonically onto a line segment in the direction of the $x$-axis.  Thus, the boundary of our fundamental piece $\hat{M}_{st}=X^{st}(\Omega_{st})$ is as expected.  As in the proof of cases (1) and (3), the surface $\hat{M}_{st}$ is a graph over its projection into the $xy$-plane.  Therefore, it has no self-intersections, and no intersections are introduced upon extension to a soap film spanning $T_{st}$.

If $(s,t)\in C_2^+\cup C_2^-$, then only one of the edges $E^y_{st}$ or $E^x_{st}$ is introduced.  So, the calculations are similar to and simpler than those above for the case $(s,t)\in C_4$.  Therefore, they are omitted.

\subsection{Proof of part (2)}

If $(s,t)\in F$, then from part (3) of Proposition \ref{Gammastprop} it follows that $\Gamma_{st}$, $\Gamma_1$ and $\Gamma_2$ have a mutual point of intersection.  This implies the Gauss image $\Omega_{st}$ is a point, which implies the fundamental piece $\hat{M}_{st}$ in this case should be planar.  Thus, we expect to find a flat cone over $T_{st}$ which is also a soap film.

Consider the cone over $T_{st}$ with vertex $P=(0,0,a)$, where \[ -(1/2)\sqrt{1-s^2-t^2}<a<(1/2)\sqrt{1-s^2-t^2}\ . \]  Denote by $E_1$, $E_2$ the Euclidean segment from $P$ to the vertex $(0,-t,\sqrt{1-s^2-t^2}/2)$, $(0,t,\sqrt{1-s^2-t^2}/2)$, respectively and $E_3$, $E_4$ the segment from $P$ to the vertex $(s,0,-\sqrt{1-s^2-t^2}/2)$, $(-s,0,-\sqrt{1-s^2-t^2}/2)$, respectively, and let $\theta_{jk}$ denote the angle between $E_j$ and $E_k$.  The direction of $E_1$ is \[ v_1=\left<0,-t,\frac{\sqrt{1-s^2-t^2}}{2}-a\right>\ , \] and the direction of $E_2$ is \[ v_2=\left<0,t,\frac{\sqrt{1-s^2-t^2}}{2}-a\right>\ .\]  Thus, the angle $\theta_{12}$ is given by \[ \cos\theta_{12}=\frac{v_1\cdot v_2}{|v_1||v_2|}=\frac{(\sqrt{1-s^2-t^2}/2-a)^2-t^2}{(\sqrt{1-s^2-t^2}/2-a)^2+t^2}\ , \] and $\theta_{12}=\arccos(-1/3)$ with $a<(1/2)\sqrt{1-s^2-t^2}$ if and only if \[ a=\frac{\sqrt{1-s^2-t^2}}{2}-\frac{t}{\sqrt{2}}\ . \]  Similarly, we have the angle $\theta_{34}$ equals $\arccos(-1/3)$ with $a>-(1/2)\sqrt{1-s^2-t^2}$ if and only if \[ a=\frac{s}{\sqrt{2}}-\frac{\sqrt{1-s^2-t^2}}{2}\ . \]  Thus, we have $\theta_{12}=\theta_{34}=\arccos(-1/3)$ if and only if \[ a=\frac{\sqrt{1-s^2-t^2}}{2}-\frac{t}{\sqrt{2}}=\frac{s}{\sqrt{2}}-\frac{\sqrt{1-s^2-t^2}}{2} \Leftrightarrow 3s^2+3t^2+2st=2 \] \[ \Leftrightarrow (s,t)\in F\ . \]  For this value of $a$ we also have \[ \cos\theta_{13}=\frac{v_1\cdot v_3}{|v_1||v_3|}=\frac{\left<0,-t,t/\sqrt{2}\right>\cdot\left<s,0,-s/\sqrt{2}\right>}{t\sqrt{3/2}s\sqrt{3/2}}=-1/3\ . \]  Therefore, since by symmetry we have $\theta_{13}=\theta_{23}=\theta_{14}=\theta_{24}$, it follows that, for $(s,t)\in F$, the flat cone over $T_{st}$ with vertex \[ P=(0,0,\sqrt{1-s^2-t^2}/2-t/\sqrt{2})=(0,0,s/\sqrt{2}-\sqrt{1-s^2-t^2}/2) \] is such that $P$ is a $T$-singularity and is hence a soap film.

\bibliographystyle{amsalpha}

\bibliography{liter}

\end{document}